\def\P{\mathbb{P}}
\def\E{\mathbb{E}}
\def\R{\mathbb{R}}
\def\N{\mathbb{N}}
\DeclareMathOperator{\sgn}{\mathrm{sgn}}
\DeclareMathOperator{\arcosh}{\mathrm{arcosh}}
\def\h{\mathbf{h}}
\DeclareMathOperator{\Defi}{\mathrel{\mathop:}=}
\newtheorem{theorem}{Theorem}[section]
\newtheorem{lemma}{Lemma}[section]
\theoremstyle{remark}
\newtheorem{remark}{Remark}[section]
\newtheorem{example}{Example}[section]
\numberwithin{equation}{section}
\title{\large MODERATE DEVIATIONS FOR RANDOM FIELD CURIE-WEISS MODELS}
\author{Matthias L\"owe \,and Raphael Meiners\thanks{Research supported by German National Academic Foundation}\;\thanks{Corresponding author: raphael.meiners@uni-muenster.de}
\\
{\small \it Institute for Mathematical Statistics, University of M\"unster, Germany}}
\begin{document}
\maketitle
\vspace{-12pt}
\abstract{\noindent The random field Curie-Weiss model is derived from the classical Curie-Weiss model by replacing the deterministic global magnetic field by random local magnetic fields. This opens up a new and interestingly rich phase structure. In this setting, we derive moderate deviations principles for the random total magnetization $S_n$, which is the partial sum of (dependent) spins. A typical result is that under appropriate assumptions on the distribution of the local external fields there exist a real number $m$, a positive real number $\lambda$, and a positive integer $k$ such that $(S_n-nm)/n^{\alpha}$ satisfies a moderate deviations principle with speed $n^{1-2k(1-\alpha)}$ and rate function $\lambda x^{2k}/(2k)!$, where $1-1/(2(2k-1)) < \alpha < 1$.}
\\
\\
{\bf Keywords: } Random field Curie-Weiss model, disordered mean-field, moderate deviations, large deviations, transfer principle.
\\
\\
{\bf AMS 2010 subject classifications: 60F10, 82B44}
\section{Introduction}\label{Introduction}

Mean-field models such as the classical Curie-Weiss model are important models in statistical mechanics. Even though these models have been introduced as solvable simplifications of nearest-neighbor models they allow an explanation of important physical phenomena such as multiple phases and metastable states. What is more, these models allow to study the behaviour of thermodynamic quantities such as the magnetization close to or at the critical temperature. For the specific case of the classical Curie-Weiss model, Ellis and Newman computed the fluctuations of the magnetization $S_n$ (see \cite{Elli1978a, Elli1978b, Elli1980}). They derived a Law of Large Numbers (LLN for short) and a Central Limit Theorem (CLT) for $S_n$. From a probabilistic point of view, a natural next step after showing a LLN and a CLT was to study large and moderate deviations properties of $S_n$. While a Large Deviations Principle (LDP) already goes back to Ellis (see \cite{Elli1985}), in \cite{Eich2004} Eichelsbacher and L\"owe showed Moderate Deviations Principles (MDPs) for $S_n$.

\noindent Note that technically speaking there is no difference between a LDP and a MDP. However, while a LDP studies the fluctuations on the scale of a LLN, a MDP studies the fluctuations on scales that are between them of a LLN and some -- possibly nonstandard -- CLT. Typical phenomena can be well illustrated on the basis of the MDP for sums of i.\,i.\,d.\ centered random variables $\sum_{i=1}^{n}X_i$ (see \cite{Eich2003}). Under suitable conditions the following holds:
\begin{itemize}
\item $\sum_{i=1}^{n}X_i/n$ satisfies a LDP with rate function $I(x) =  \sup_y\{x y - \ln \E[\exp(y X_1)]\}$.
\item $\sum_{i=1}^{n}X_i/n^{\alpha}, \alpha \in (\frac12,1),$ satisfies a MDP with rate function $I(x) = x^2/(2\E[X_1^2])$.
\item $\sum_{i=1}^{n}X_i/\sqrt{n}$ converges in distribution to a Gaussian with mean 0 and variance $\E[X_1^2]$.
\end{itemize}
Note that the LDP rate function depends on the fine structure of $X_1$ whereas the MDP rate function just depends on $\E[X_1^2]$. This property is inherited from the CLT, where the limiting distribution also just depends on $\E[X_1^2]$. What is more, one observes that the MDP rate function appears as the first term of the Taylor expansion of the logarithmic limiting density in the CLT. However, a MDP also inherits properties from the large deviations such as the exponential decay of probabilities. Although the universality of these properties have never been proved one does find more examples for their existence even if the CLT density is not the normal distribution (see e.\,g.\ \cite{Eich2004, LM01, LMR02, Reic2012}). Note, however that the picture may be different in the setting of disordered models. E.\,g.\ in the Hopfield model there is a non-standard CLT at the critical temperature, while an almost sure moderate deviations principle may fail to hold (see \cite{GeLoe99, EiLoe_hopf}). 

\noindent The purpose of the present paper is to complete the fluctuation picture of $S_n$ in the setting of random field Curie-Weiss models (RFCW). These models are natural extensions of the classical Curie-Weiss model to the effect that the deterministic global external magnetic field is replaced by random local external fields. The models are some of the easiest disordered mean-field models and have been studied intensively over the last decades, see e.\,g.\ \cite{Sali1985, Amar1991, Amar1992} and the references therein. For results on the dynamics or metastates of these models see e.\,g.\ \cite{Kuel1997,Font2000,Bovi2009,Iaco2010}. It may be worthwhile noting that in the present paper, unlike many of the above authors, we will neither assume the random external fields to be symmetrically Bernoulli distributed nor to be bounded at all. The quenched free energy and the annealed free energy of the RFCW have been computed in \cite{Loew2012}. The fluctuations of $S_n$ on the level of a CLT were studied by Amaro de Matos and Perez \cite{Amar1991}. Here, interestingly, the limiting density depends on the distribution of random external field. On the other hand, in \cite{Loew2012} we derived an explicit LDP for $S_n$ (and her as well the rate functions depends on the distribution of the external field). In this paper we prove MDPs for $S_n$ and thus complete the investigation of the fluctuations of $S_n$.

\noindent The outline of the paper is as follows: In Section \ref{Model} we formally introduce random field Curie-Weiss models. In Section \ref{Main results} we state our main results, which are going to be proved in Section \ref{Proofs}. Meanwhile, we apply our results in Section \ref{Examples} and present examples for different choices of the random external field. We consider the cases of the classical Curie-Weiss model with external field $h$ and the Curie-Weiss model with dichotomous external field. For the physical relevance of the latter model see e.\,g.\ \cite{Thom1972}, p. 105. In Section \ref{Auxiliary results} we prepare the proofs of our main results by proving mainly technical auxiliary results.

\section{Random field Curie-Weiss models}\label{Model}

The Curie-Weiss model is a mean-field model of a ferromagnet. Due to its mean-field structure the spatial location of the spins is unimportant. The Hamiltonian of the Curie-Weiss model with external magnetic field $h \in \R$ can therefore be described by
\begin{equation*}
H_{n,h}^{\text{CW}} (\sigma)~=~ -\frac 1 {2n} \sum_{i,j=1}^n  \sigma_i \sigma_j - h \sum_{i=1}^n \sigma_i ~=~ -\frac 1 {2n} S_n^2 -h S_n, \quad \sigma\in\{-1,+1\}^n ,
\end{equation*}
where $S_n=\sum_{i=1}^n \sigma_i $ is the total magnetization of the model. In the RFCW the constant external magnetic field $h$ is now replaced by i.\,i.\,d.\ random variables $(h_i, i \in \N)$
distributed according to $\mu$, the $\N$-fold product measure of the marginal distribution of $h_1$, $\nu$. The Hamiltonian of the RFCW is thus given by
\begin{equation}\label{Hamiltonian}
H_{n,\h}^{\text{RFCW}} (\sigma)~=~ -\frac 1 {2n} \sum_{i,j=1}^n  \sigma_i \sigma_j - \sum_{i=1}^n h_i\sigma_i,  \qquad \sigma\in\{-1,+1\}^n.
\end{equation}
Correspondingly, the RFCW can be associated with the following Gibbs measure on $\{-1,+1\}^n$ at inverse temperature $\beta >0$
\begin{equation*}
P_{n,\beta}^\h(\sigma) ~=~  \frac{1}{Z_{n,\beta}^\h}\exp\left(\frac{\beta}{2n} \left[ \sum_{i=1}^n \sigma_i \right]^2 + \beta\sum_{i=1}^n h_i \sigma_i\right),
\end{equation*}
where
\begin{equation*}
Z_{n,\beta}^\h ~=~ \sum _{\sigma \in \{-1,+1\}^n} \exp\left(\frac{\beta}{2n} \left[ \sum_{i=1}^n \sigma_i \right]^2 + \beta\sum_{i =1}^n h_i \sigma_i\right)
\end{equation*}
is a normalizing constant called partition function.

\noindent In the following, let $\h$ denote the random variable corresponding to the vector $(h_i : i \in \N)$ and $h$ a realization of this vector, say $h = (\tilde{h}_i : i \in \N)$. Sometimes, when it is clear from the context, we also use the notation $h \in \mathbb{R}$ for an integration variable. We write $\rho \sim f(s) ds$ or $\rho \sim Q$ if the measures $\rho$ and $Q$ equal in distribution where $dQ = f(s) ds$. By $\stackrel{w}{\rightarrow}$ we denote weak convergence and by $\mathcal{N}(\tilde{\mu},\tilde{\sigma}^2)$ a normal distribution with mean $\tilde{\mu} \in \R$ and variance $\tilde{\sigma}^2 \in \R_{>0}$. The Borel $\sigma$-algebra of a topological space $X$ is denoted by $\mathcal{B}(X)$ and the power set of a set $S$ by $\mathcal{P}(S)$.

\noindent As mentioned before in the setting of RFCW a lot is already known about the fluctuations of $S_n$. They crucially depend on the function
\begin{equation*}
G(x) ~\Defi~ G_{\beta}^{\nu}(x) ~\Defi~ \frac{\beta}{2} x^2 - \int_{\R}\ln \cosh[\beta(x+h)]d\nu(h),
\end{equation*}
which is real analytic. Therefore, the set of (local or global) minima of $G$ is discrete. This set is also bounded since $G(x) \sim \beta x^2 /2$ as $|x| \to \infty$ and consequently the set is in addition finite and non-empty. We stress this as the minima of $G$ are of particular importance. We shall call $\nu$ pure (and centered at $m$) if $G$ has a unique global minimum (at $m$). What is more, we shall call a real number $m$ a minimum of type $k \in \N$ and strength $\lambda \in \R_{>0}$ if
\begin{equation*}
G(x) ~=~ G(m) +\frac{\lambda}{(2k)!} (x-m)^{2k} +\mathcal{O}\big((x-m)^{2k+1}\big) \quad \text{ as } x\rightarrow m.
\end{equation*}
Note that every minimum of $G$ is of finite type as the real analytic function $G$ would be constant if $G^{(n)}(m) = 0$ for all $n \geq 1$.

\noindent A first result about the fluctuations of $S_n$ has been proved by Amaro de Matos and Perez:

\begin{theorem}[CLT, cf.\ Theorem 2.8 in \cite{Amar1991}]\label{CLT}
Suppose that $\nu$ has a finite second moment, i.\,e.
\begin{equation*}
\int_\R h^2 d\nu(h) ~<~ \infty,
\end{equation*}
and that $\nu$ is not a Dirac measure. Let $\{m_1,\ldots,m_l\}$ be the set of global minima of $G$ and let $k_i$ be the type and $\lambda_i$ be the strength of $m_i,1\leq i\leq l$.
\begin{itemize}
\item[(i)] If $l=1$, then
\begin{equation*}
P_{n,\beta}^h \circ \left(\frac{S_n-nm_1}{n^{1-\frac{1}{2(2k_1-1)}}}\right)^{-1} ~\stackrel{w}{\rightarrow}~
\begin{cases}
\mu_{k_1},& \text{ if } k_1 > 1,\\
\tilde{\mu}_{k_1},& \text{ if } k_1 = 1,
\end{cases}
\end{equation*}
as $n \to \infty$ and for some constants $c_1,c_2>0$,
\begin{eqnarray*}
\mu_{k_1} &\sim& s^{2k_1-2} e^{-c_1s^{2(2k_1-1)}} ds,\\
\tilde{\mu}_{k_1} &\sim& \mathcal{N}(u_1(h),\lambda^{-1}-\beta^{-1}),\\
\mu^{u_1} &\sim& \mathcal{N}(0,c_2).
\end{eqnarray*}
\item[(ii)] If $l>1$, then for every $1\leq i \leq l$ with $k_i = 1$ there exists $A = A(m_i) > 0$ such that for every $0<a<A$
\begin{equation*}
P_{n,\beta}^h \circ \left(\frac{S_n-nm_i}{n^{1-\frac{1}{2(2k_i-1)}}}\bigg|\,\frac{S_n}{n} \in [m_i-a,m_i+a] \right) ~\stackrel{w}{\rightarrow}~ \tilde{\mu}_{k_i},
\end{equation*}
as $n \to \infty$, where for some $c_3>0$
\begin{eqnarray*}
\tilde{\mu}_{k_i} &\sim& \mathcal{N}(u_2(h),\lambda_i^{-1}-\beta^{-1}),\\
\mu^{u_2} &\sim& \mathcal{N}(0,c_3).
\end{eqnarray*}
\end{itemize}
\end{theorem}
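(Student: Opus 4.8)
The plan is to exploit the Hubbard--Stratonovich transformation. Applying $e^{a^2/2}=(2\pi)^{-1/2}\int_{\R}e^{-t^2/2+at}\,dt$ with $a=\sqrt{\beta/n}\,S_n$ to the Gibbs weight, one sees that $Z_{n,\beta}^h=c_n\int_\R e^{-nG_n^h(x)}\,dx$ for an explicit $c_n>0$, where
\[
G_n^h(x)\;\Defi\;\frac{\beta}{2}x^2-\frac1n\sum_{i=1}^n\ln\cosh\!\big(\beta(x+h_i)\big),
\]
and, crucially, that under $P_{n,\beta}^h$ one may introduce an auxiliary variable $X$ with Lebesgue density proportional to $e^{-nG_n^h(x)}$ such that, conditionally on $X=x$, the spins are independent with $\E[\sigma_i\mid X=x]=\tanh(\beta(x+h_i))$. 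Hence, given $X$, the variable $S_n$ is a sum of independent bounded variables with mean $M_n(X)=\sum_i\tanh(\beta(X+h_i))$ and variance $V_n(X)=\sum_i(1-\tanh^2(\beta(X+h_i)))$. First I would show, via the law of large numbers, that $G_n^h\to G$ uniformly on compacts and that $G_n^h$ is uniformly coercive; an exponential Laplace estimate using $\inf\{G(x)-\min G:\operatorname{dist}(x,\{m_1,\dots,m_l\})\ge\delta\}>0$ then confines $X$ to shrinking neighbourhoods of the global minima. In case (ii) the conditioning $\{S_n/n\in[m_i-a,m_i+a]\}$, for $a$ small, forces $X$ (up to the $O(n^{-1/2})$ gap $X-S_n/n$) into the single well around $m_i$, so that in both parts the analysis localises to one minimum $m:=m_i$ of type $k:=k_i$ and strength $\lambda:=\lambda_i$.

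Next I would carry out the local analysis at $m$. Setting $\kappa:=\tfrac1{2(2k-1)}$, I study $S:=n^{\kappa}(X-m)$, whose density is proportional to $e^{-nG_n^h(m+sn^{-\kappa})}$. Writing $G_n^h=G-\tfrac1nR_n^h$ with $R_n^h(x)=\sum_i\big(\ln\cosh(\beta(x+h_i))-\int\ln\cosh(\beta(x+h))\,d\nu(h)\big)$ and Taylor-expanding around $m$, the vanishing of $G',\dots,G^{(2k-1)}$ at $m$ leaves the deterministic leading term $\tfrac{\lambda}{(2k)!}s^{2k}\,n^{\theta}$ with $\theta:=\tfrac{k-1}{2k-1}\ge0$, while --- since $m=\int\tanh(\beta(m+h))\,d\nu(h)$ --- the leading stochastic term is the linear one, $s\,n^{-\kappa}(R_n^h)'(m)=\beta\zeta_n\,s\,n^{\theta}$ (note $\tfrac12-\kappa=\theta$), where $\zeta_n:=\tfrac1{\sqrt n}\sum_i(\tanh(\beta(m+h_i))-m)$ is a centred i.i.d.\ sum and all further terms carry strictly smaller powers of $n$. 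Thus the density of $S$ is, to leading order, $\propto\exp(-n^{\theta}\phi_n(s))$ with the strictly convex $\phi_n(s)=\tfrac{\lambda}{(2k)!}s^{2k}-\beta\zeta_n s$. In parallel, expanding $M_n$ and $V_n$ at $m$ with $X=m+sn^{-\kappa}$ gives
\[
\frac{M_n(X)-nm}{n^{1-\kappa}}=\zeta_n\,n^{\kappa-1/2}+\beta p\,s+o(1),\qquad \frac{V_n(X)}{n^{2(1-\kappa)}}=p\,n^{2\kappa-1}+o\big(n^{2\kappa-1}\big),
\]
where $p:=\int(1-\tanh^2(\beta(m+h)))\,d\nu(h)$, together with the identity $\lambda=\beta-\beta^2p$ (in particular $0<\lambda<\beta$).

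Now the two regimes. If $k=1$ then $\kappa=\tfrac12$ and $\theta=0$: there is no concentration, $e^{-\phi_n}$ normalises to $\mathcal N(\beta\zeta_n/\lambda,\lambda^{-1})$, the conditional fluctuation of $S_n$ given $X$ adds an independent $\mathcal N(0,p)$, and assembling $\tfrac{S_n-nm}{\sqrt n}=\tfrac{M_n(X)-nm}{\sqrt n}+\tfrac{S_n-M_n(X)}{\sqrt n}$ and simplifying with $\lambda=\beta-\beta^2p$ gives, conditionally on the disorder, $\mathcal N(u(h),\lambda^{-1}-\beta^{-1})$ with $u(h)=\beta\zeta_n/\lambda$; a concluding central limit theorem for the i.i.d.\ sum $\zeta_n$ then yields $u\sim\mathcal N(0,c)$. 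If $k>1$ then $\kappa<\tfrac12$ and $\theta>0$: both the term $\zeta_n\,n^{\kappa-1/2}$ in $M_n$ and the conditional variance $p\,n^{2\kappa-1}$ vanish, while a Watson-type estimate shows $e^{-n^{\theta}\phi_n}$ concentrates at the minimiser $s_n^{\ast}$ of $\phi_n$, which solves $\tfrac{\lambda}{(2k-1)!}(s_n^{\ast})^{2k-1}=\beta\zeta_n$; hence $\tfrac{S_n-nm}{n^{1-\kappa}}\approx\beta p\,s_n^{\ast}=\beta p\big(\tfrac{(2k-1)!\,\beta\zeta_n}{\lambda}\big)^{1/(2k-1)}$. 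The central limit theorem $\zeta_n\Rightarrow\mathcal N(0,\tau^2)$ with $\tau^2=\operatorname{Var}(\tanh(\beta(m+h_1)))>0$ (this is where the hypothesis that $\nu$ is not Dirac enters), combined with the change of variables $v\mapsto v^{2k-1}$ --- whose Jacobian produces exactly the factor $|v|^{2k-2}$ --- identifies the limiting law as $\mu_k\sim v^{2k-2}e^{-c\,v^{2(2k-1)}}\,dv$.

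The hard part will be upgrading these formal expansions to genuine weak convergence. The rescaled exponent $nG_n^h(m+sn^{-\kappa})$ is an infinite series, several of whose terms diverge individually as fixed powers of $n$ and are tamed only after being paired with the matching disorder fluctuation $(R_n^h)^{(j)}(m)$ and the shrinking factor $n^{-j\kappa}$; running dominated convergence therefore requires a single $n$-uniform integrable majorant on all of $\R$, which one would obtain from the coercivity of $G$ together with almost-sure control $(R_n^h)^{(j)}(m)=O(\sqrt{n\log\log n})$ of the disorder sums via the law of the iterated logarithm, and the same estimates must preclude any leakage of $X$ to the other wells. For part (ii) one must in addition verify that the conditioning on $\{S_n/n\in[m_i-a,m_i+a]\}$ decouples the wells uniformly in $n$, i.e.\ that the complementary Gibbs mass is exponentially small. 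I expect these tail and uniformity estimates, rather than the local Taylor and Laplace computations, to be the main obstacle.
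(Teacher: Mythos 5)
Note first that the paper itself does not prove Theorem \ref{CLT}: it is imported verbatim from the literature (Theorem 2.8 of \cite{Amar1991}), so there is no internal proof to compare against; the closest relatives here are Lemma \ref{transformation}, Lemmas \ref{Lemma:deMatos} and \ref{uniform convergence}, and the Laplace arguments of Section \ref{Proofs}, all at the MDP rather than the CLT scale. Your sketch is a sound reconstruction of the mechanism behind the cited result and follows the same Hubbard--Stratonovich/Laplace philosophy, in a slightly different packaging: instead of convolving $S_n$ with an independent Gaussian and then deconvolving (the route of \cite{Amar1991}, mirrored on the MDP level by Lemma \ref{transfer principle}), you use the conditional-independence representation (given the auxiliary field $X$ the spins are independent with means $\tanh(\beta(X+h_i))$) and decompose $S_n=M_n(X)+(S_n-M_n(X))$, which replaces the deconvolution lemma by an elementary conditional variance estimate. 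The balance giving the exponent $1-\tfrac{1}{2(2k-1)}$, the variance $\lambda^{-1}-\beta^{-1}$ via $\lambda=\beta-\beta^2p$ in the $k=1$ case, the identification of the $k>1$ limit as the signed $(2k-1)$-st root of a Gaussian (whose Jacobian produces $|v|^{2k-2}$), and the role of the non-Dirac hypothesis ($\tau^2>0$) all check out. Two caveats. (a) Mode of convergence: for $k>1$, and for the random centering $u_1(h)$ when $k=1$, the limit is driven by $\zeta_n$, which by the law of the iterated logarithm does not converge for $\mu$-a.e.\ fixed $h$; the theorem must therefore be read as convergence in distribution jointly in the disorder, and your closing step ``CLT for $\zeta_n$'' is exactly this annealed step --- make that explicit, since your earlier ``conditionally on the disorder, $\mathcal{N}(u(h),\lambda^{-1}-\beta^{-1})$ with $u(h)=\beta\zeta_n/\lambda$'' is an $n$-dependent approximation, not a quenched limit. (b) The identity $\lambda=\beta-\beta^2p$ holds only for $k=1$ (it is $G''(m)$); for $k\ge2$ one has instead $\beta p=1$ because $G''(m)=0$, so the general parenthetical ``$0<\lambda<\beta$'' should go --- you only use the identity where it is valid, so nothing downstream breaks. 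Finally, as you acknowledge, the uniform majorants, the exponential confinement of $X$ to the chosen well (and, for part (ii), the exponential decoupling of wells under the conditioning), and the a.s.\ control of the disorder derivatives constitute the real labour; these are precisely the estimates that Lemmas \ref{Lemma:deMatos} and \ref{uniform convergence} supply in this paper at the MDP scale and that \cite{Amar1991} carries out at the CLT scale, so your outline is plausible but not yet a complete proof.
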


\noindent Moreover, we already proved a LDP for $S_n$:

\begin{theorem}[LDP, cf.\ Theorem 3.3 in \cite{Loew2012}]\label{LDP}
Suppose that $\nu$ has a finite absolute first moment, i.\,e.
\begin{equation*}
\int_\R |h| d\nu(h) ~<~ \infty.
\end{equation*}
Then, $\mu$-a.\,s.\ the sequence of measures $(P_{n,\beta}^\h \circ (S_n/n)^{-1})_{n \in \N}$ satisfies a LDP with good rate function $I_\beta^\nu$ defined by
\begin{equation*}
I_\beta^{\nu}(x) ~\Defi~ \sup_{y \in \R} \left\{ G(y) - \frac{\beta}{2}(x-y)^2 \right\} - \inf _{w \in \R} G(w).
\end{equation*}
\end{theorem}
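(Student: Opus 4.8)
The plan is to decouple the quadratic interaction by a change of measure and to reduce the statement to a quenched Cram\'er-type theorem for a sum of independent (but not identically distributed) bounded spins. Let $Q_n^\h$ denote the product measure on $\{-1,+1\}^n$ under which the coordinates are independent with $Q_n^\h(\sigma_i=+1)=e^{\beta h_i}/(2\cosh\beta h_i)$; then
\begin{equation*}
\frac{dP_{n,\beta}^\h}{dQ_n^\h}(\sigma) ~=~ \frac{1}{\widehat Z_n^\h}\,\exp\!\Big(\tfrac{\beta}{2n}S_n^2\Big) ~=~ \frac{1}{\widehat Z_n^\h}\,\exp\!\Big(n\cdot\tfrac{\beta}{2}(S_n/n)^2\Big),
\end{equation*}
with $\widehat Z_n^\h=\E_{Q_n^\h}[\exp(\tfrac{\beta}{2n}S_n^2)]$. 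Since $S_n/n$ always lies in the compact set $[-1,1]$ and $x\mapsto\tfrac\beta2 x^2$ is continuous there, once we know that $S_n/n$ obeys an LDP under $Q_n^\h$, Varadhan's lemma in its tilted-LDP form transfers this to an LDP under $P_{n,\beta}^\h$: the quadratic term contributes an additive $-\tfrac\beta2 x^2$ to the rate function together with a normalising infimum. It then remains to compute the $Q_n^\h$-rate function and match the result with $I_\beta^\nu$.

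For the LDP under $Q_n^\h$ I would invoke the G\"artner--Ellis theorem. By independence, $\tfrac1n\ln\E_{Q_n^\h}[e^{\lambda S_n}]=\tfrac1n\sum_{i=1}^n\big(\ln\cosh(\lambda+\beta h_i)-\ln\cosh\beta h_i\big)$, and since $(h_i)$ are i.\,i.\,d.\ $\sim\nu$ and $|\ln\cosh t|\le|t|$, the hypothesis $\int|h|\,d\nu<\infty$ lets the strong law of large numbers identify the pointwise limit as $\Lambda(\lambda)\Defi\int_\R\ln\cosh(\lambda+\beta h)\,d\nu(h)-\int_\R\ln\cosh(\beta h)\,d\nu(h)$. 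To obtain convergence simultaneously for all $\lambda$ on one set of full $\mu$-measure, I would first secure the strong law on the countable dense set $\lambda\in\Q$ and then use that the prelimit functions $\lambda\mapsto\tfrac1n\ln\E_{Q_n^\h}[e^{\lambda S_n}]$ are convex while the limit $\Lambda$ is finite (hence continuous): pointwise convergence on a dense set then forces locally uniform convergence everywhere. As $\Lambda$ is finite and smooth on all of $\R$ it is essentially smooth, so G\"artner--Ellis gives that, $\mu$-a.\,s., $S_n/n$ satisfies an LDP under $Q_n^\h$ with good rate function $\Lambda^*(x)=\sup_{\lambda\in\R}\{\lambda x-\Lambda(\lambda)\}$.

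Combining the two steps, $\mu$-a.\,s.\ $S_n/n$ satisfies under $P_{n,\beta}^\h$ an LDP with rate function $\Lambda^*(x)-\tfrac\beta2 x^2-\inf_w\{\Lambda^*(w)-\tfrac\beta2 w^2\}$. The substitution $\lambda=\beta y$ in the Legendre transform and completing the square give $\Lambda^*(x)-\tfrac\beta2 x^2=\sup_{y\in\R}\{G(y)-\tfrac\beta2(x-y)^2\}+\mathrm{const}$, so the constant cancels in the difference and the rate function equals $\sup_y\{G(y)-\tfrac\beta2(x-y)^2\}-\inf_w\sup_y\{G(y)-\tfrac\beta2(w-y)^2\}$. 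To finish the identification I would prove $\inf_w\sup_y\{G(y)-\tfrac\beta2(w-y)^2\}=\inf_w G(w)$: the inequality ``$\ge$'' is immediate by choosing $y=w$, and for ``$\le$'' one checks that if $m$ is a global minimiser of $G$ then $G(y)-G(m)\le\tfrac\beta2(y-m)^2$ for every $y$, which follows from the tangent-line inequality for the convex function $\ln\cosh$ combined with the stationarity relation $m=\int\tanh\beta(m+h)\,d\nu(h)$ coming from $G'(m)=0$. Lower semicontinuity of $I_\beta^\nu$ is clear (supremum of continuous functions), and $I_\beta^\nu\equiv+\infty$ off $[-1,1]$ (again via $\ln\cosh t\le|t|$), so $I_\beta^\nu$ is a good rate function. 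The main obstacle is the quenched part of the second step: promoting the $\lambda$-wise strong law to a statement valid for all $\lambda$ on a single $\mu$-almost sure event while coping with the linear growth of $\ln\cosh$ — precisely the point where the finite-first-moment assumption on $\nu$ and the convexity argument are needed.
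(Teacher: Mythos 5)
Your argument is correct, but it does not follow the route behind the paper's statement. The paper itself offers no proof here: Theorem \ref{LDP} is imported from Theorem 3.3 of \cite{Loew2012}, and the way it is used later (in the proof of Lemma \ref{transfer principle} the authors invoke ``Theorem 3.3 in \cite{Loew2012} and its proof'' to get, $\mu$-a.\,s., LDPs for \emph{both} $S_n/n$ and its Hubbard-Stratonovich transform $S_n/n+W/\sqrt{n}$) shows that the original proof runs through the same machinery as the present paper's MDPs: the Hubbard-Stratonovich transformation (Lemma \ref{transformation} with $\alpha=1$, $m=0$), Laplace-type asymptotics for $\int e^{-nG_n^h}$ based on the quenched convergence $G_n^h\to G$ (Lemma \ref{Lemma:deMatos}), giving the rate $G-\inf G$ for the transform, and then a deconvolution/transfer step to remove the independent Gaussian, which produces the rate $I_\beta^\nu$ in the stated ``sup'' form. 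You instead decouple with the product reference measure $Q_n^\h$, prove a quenched G\"artner--Ellis LDP for $S_n/n$ under $Q_n^\h$ (the rationals-plus-convexity upgrade to a single full-measure event is exactly right, and $\Lambda$ finite and differentiable on all of $\R$ does make G\"artner--Ellis applicable), and then tilt by the bounded continuous function $\tfrac{\beta}{2}x^2$ on $[-1,1]$ via Varadhan. The price of your route is that the rate comes out in Legendre form $\Lambda^*(x)-\tfrac\beta2 x^2-\mathrm{const}$, so you need the extra identification $\inf_w\sup_y\{G(y)-\tfrac\beta2(w-y)^2\}=\inf_w G(w)$; your verification of $G(y)-G(m)\le\tfrac\beta2(y-m)^2$ at a global minimiser $m$ via the tangent-line inequality for $\ln\cosh$ and the stationarity relation $m=\int\tanh\beta(m+h)\,d\nu(h)$ is correct, and goodness of the rate function follows as you say since $I_\beta^\nu\equiv+\infty$ off $[-1,1]$. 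What your approach buys is a self-contained, classical Cram\'er/G\"artner--Ellis argument that never introduces the auxiliary Gaussian; what the paper's route buys is the rate function directly in the $G$-based form and, as a by-product, the LDP for the Hubbard-Stratonovich transform with rate $G-\inf G$, which is precisely the extra input the transfer principle of Section \ref{Auxiliary results} relies on and which your argument would not supply without an additional (easy) Laplace/Varadhan step for the convolved measure.
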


\noindent It is well-known from Large Deviations Theory (see e.\,g. Theorem II.7.2 in \cite{Elli1985}) that the magnetization per spin $S_n/n$ is asymptotically concentrated around the global minima of the rate function $I_\beta^{\nu}$. The global minima of $I_\beta^{\nu}$, which depend on $\beta$ and $\nu$, coincide with the global minima of $G$ (cf. Remark 3.2 in \cite{Loew2012}). Therefore, we identify the following phases of the system:
\begin{itemize}
\item[(i)]{\it Paramagnetic phase}: $G$ has a unique global minimum of type 1.
\item[(ii)]{\it Ferromagnetic phase}: $G$ has two global minima, both of type 1.
\item[(iii)]{\it First-order phase transition}: $G$ has several global minima, all of type 1.
\item[(iv)]{\it Second-order phase transition}: $G$ has a unique global minimum of type 2.
\item[(v)]{\it Tricritical point}: $G$ has a unique global minimum of type 3.
\end{itemize}

\section{Main results}\label{Main results}
Before we state our main results, let us briefly recall the definition of a LDP (see \cite{Demb1998}). We say that a sequence of measures $(\mu_n)_n$ on $\R$ satisfies a LDP with speed $(a_n)_{n \in \N}$ and (good) rate function $I$ if $I$ is a lower semicontinuous function with compact level sets such that for all measureable sets $C \in \mathcal{B}(\R)$
\begin{eqnarray*}
- \inf_{x \in C^\circ} I(x) ~\leq~ \liminf_{n \to \infty} \frac{1}{a_n} \ln \mu_n (C) ~\leq~ \limsup_{n\to \infty} \frac{1}{a_n} \ln \mu_n (C) ~\leq~ - \inf_{x \in \overline{C}} I(x).
\end{eqnarray*}
Note that a rate function is always non-negative as the choice $C = \R$ reveals. Moreover, a sequence of real-valued random variables $(Y_n)_{n \in \N}$ is said to satisfy a LDP with speed $(a_n)_{n \in \N}$ and (good) rate function $I$ if the sequence of their distributions does.

\noindent The purpose of the present paper is to prove a MDP for the magnetization in the setting of random field Curie-Weiss models. Technically, the definition of a MDP is identical to the definition of a LDP. However, it is common to speak of a LDP whenever the scale coincides with the scale of a LLN. In contrast, one speaks of a MDP, if the scale is between the scales of a LLN and some sort of CLT. Considering Theorem \ref{CLT} and Theorem \ref{LDP} we will therefore study the behavior of $(S_n-n m)/n^{\alpha}$ for $1-1/(2(2k-1))<\alpha<1$, where $k$ is the type of the minimum $m$. To state our results we need to introduce two quantities. The height of the minimum $m$ is defined as
\begin{equation*}
h ~\Defi~ h(m) ~\Defi~ G(m) - \inf_{w \in \R}{G(w)} ~(\geq~ 0)
\end{equation*}
and the broadness of $m$ by
\begin{equation*}
b ~\Defi~ b(m) ~\Defi~ \inf_{\substack{y \in \R:\\G(y)<G(m)}}{|y-m|} ~(>~ 0).
\end{equation*}
Our results read as follows:

\begin{theorem}[MDP, conditioned version]\label{conditioned version}
Let $m$ be a (local or global) minimum of $G$ and let $m$ be of type $k$ and strength $\lambda$. Suppose that $\nu$ has a finite second moment and that
\begin{equation}\label{eq:condition}
\beta ~>~ \frac{2 h}{b^2}.
\end{equation}
Then, $\mu$-a.\,s.\ there exists $A = A(m) > 0$ such that for all $0 < a < A$ and every $1-1/\big(2(2k-1)\big)<\alpha<1$ the sequence of measures
\begin{equation*}
\left(P_{n,\beta}^\h \left(\frac{S_n-n m}{n^{\alpha}} \in \bullet \Big|\, \frac{S_n}{n} \in [m-a,m+a]\right)\right)_{n \in \N}
\end{equation*}
satisfies a moderate deviations principle with speed $n^{1-2k(1-\alpha)}$ and rate function
\begin{equation}\label{eq:rate function}
I(x) ~\Defi~ I_{k, \lambda, \beta}(x) ~\Defi~
\begin{cases}
\frac{x^2}{2 \sigma^2},& \text{ if } k =1,\\
\frac{\lambda x^{2k}}{(2 k)!},& \text{ if } k \geq2,\\
\end{cases}
\end{equation}
where $\sigma^2\Defi \lambda^{-1} - \beta^{-1}$.
\end{theorem}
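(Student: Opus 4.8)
The plan is to reduce the conditioned MDP to an explicit integral representation for the density of $S_n$ and then apply the Laplace/Varadhan method together with a standard tilting argument. The starting point is the classical Hubbard--Stratonovich trick: under $P_{n,\beta}^\h$ one writes
\begin{equation*}
\frac{\beta}{2n}S_n^2 \;=\; \text{const} + \E\left[\exp\!\left(\sqrt{\tfrac{\beta}{n}}\,W S_n\right)\Big|\,S_n\right],
\end{equation*}
with $W\sim\mathcal N(0,1)$ independent of the spins, so that after summing over $\sigma$ the partition function and the law of $S_n/n$ acquire a representation in which the spins decouple and the Gibbs weight is governed precisely by the function $G$. More concretely, one obtains that $\mu$-a.s.\ the density of $S_n/n$ with respect to a reference measure behaves, up to sub-exponential factors, like $\exp(-n\,[G(x)-\inf G])$ times a Gaussian-type correction; this is exactly the mechanism behind Theorem~\ref{LDP}, and I would quote the intermediate estimates from \cite{Loew2012} rather than rederive them. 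The conditioning on $\{S_n/n\in[m-a,m+a]\}$ then localises everything near $m$: by condition \eqref{eq:condition} and the definitions of the height $h$ and broadness $b$, for $a$ small enough the competing minima (and in particular the global infimum of $G$) lie outside $[m-a,m+a]$, so on the conditioned event $m$ is effectively the \emph{unique} relevant minimum. This is the step where \eqref{eq:condition} is used: it guarantees $G(m) < \inf_{|y-m|\le a} $ of the "barrier value", so that the exponential cost of leaving a neighbourhood of $m$ dominates the height penalty $h$ and the conditioning does not merely reweight between several wells.

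Next I would rescale. Writing $S_n = nm + n^{\alpha} x$, the relevant exponent is $-n\,[G(m+n^{\alpha-1}x)-G(m)]$, and Taylor expansion at the type-$k$ minimum gives
\begin{equation*}
n\big[G(m+n^{\alpha-1}x)-G(m)\big] \;=\; \frac{\lambda}{(2k)!}\,n^{1-2k(1-\alpha)}x^{2k} + o\!\big(n^{1-2k(1-\alpha)}\big),
\end{equation*}
since the next term is of order $n^{1-(2k+1)(1-\alpha)}=o(n^{1-2k(1-\alpha)})$; the condition $\alpha>1-1/(2(2k-1))$ is what forces the speed $a_n=n^{1-2k(1-\alpha)}$ to tend to infinity while still being $o(n)$, i.e.\ genuinely moderate. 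For $k\ge 2$ the Gaussian Hubbard--Stratonovich correction, which contributes a factor of order $\exp(\tfrac{\beta}{2}n^{2\alpha-1}x^2)$, is lower-order (because $2\alpha-1 < 1-2k(1-\alpha)$ precisely when $k\ge 2$ and $\alpha<1$), so the rate function is the pure quartic-or-higher term $\lambda x^{2k}/(2k)!$. For $k=1$ the two contributions have the same order $n^{2\alpha-1}$ and combine: the weight becomes $\exp\!\big(-\tfrac{\lambda}{2}n^{2\alpha-1}x^2 + \tfrac{\beta}{2}n^{2\alpha-1}x^2\big)$, which after also accounting for the $\mathrm{arsinh}$-type entropy term coming from the decoupled spins (its second-order coefficient is $\beta^{-1}$ in the right normalisation) yields the rate $x^2/(2\sigma^2)$ with $\sigma^2=\lambda^{-1}-\beta^{-1}$; note $\sigma^2>0$ is automatic at a type-$1$ minimum since the second-derivative condition $G''(m)=\beta-\beta^2\int\mathrm{sech}^2>0$ forces $\lambda<\beta$. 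Making these heuristics rigorous means proving matching upper and lower bounds: the upper bound follows from a Laplace estimate on the localised integral plus a crude exponential bound (of order $e^{-cn}$, hence negligible at speed $a_n=o(n)$) on the contribution of $x$ outside any compact set, using again \eqref{eq:condition}; the lower bound follows from restricting the integral to a small ball around the target point and a continuity/uniform-convergence argument for the Taylor remainder.

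I would organise the write-up as: (1) the Hubbard--Stratonovich representation and the resulting exact formula for the conditioned law, citing the auxiliary results of Section~\ref{Auxiliary results}; (2) a lemma, using \eqref{eq:condition}, that for $a<A$ the conditioned measure concentrates exponentially fast (at speed $n$) in an arbitrarily small neighbourhood of $m$, so that the MDP at the sub-linear speed $a_n$ is entirely determined by the local behaviour; (3) the rescaled Laplace analysis giving the rate function via the Taylor expansion above, splitting into the cases $k=1$ and $k\ge 2$; (4) assembling the upper and lower bounds into the full LDP/MDP statement. The main obstacle I anticipate is not any single estimate but the bookkeeping of competing scales in step (3) — keeping careful track of which of the several $n$-powers ($n$, $n^{1-2k(1-\alpha)}$, $n^{2\alpha-1}$, $n^{1-(2k+1)(1-\alpha)}$) dominates in each regime, and in the $k=1$ case correctly disentangling the Gaussian transform contribution from the spin-entropy contribution so that the two $\beta$-dependent pieces combine into the single constant $\sigma^2=\lambda^{-1}-\beta^{-1}$ rather than being double-counted. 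A secondary technical point is the $\mu$-a.s.\ qualifier: one must control the disorder-dependent error terms (sums like $\frac1n\sum_i$ of functions of $h_i$) uniformly enough — via a Borel--Cantelli / concentration argument leaning on the finite second moment of $\nu$ — that the deterministic Laplace analysis goes through for almost every realisation of $\h$.
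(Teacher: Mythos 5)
Your overall flavor (Hubbard--Stratonovich plus Laplace asymptotics) matches the paper, but the proposal has a genuine gap at its core: the level of precision at which you describe the law of $S_n/n$. You propose to use a representation of the density of $S_n/n$ that is valid ``up to sub-exponential factors'' and to quote the LDP-level estimates of \cite{Loew2012}; this is useless at moderate scales, since an error of size $e^{o(n)}$ may well be $e^{n^{\gamma}}$ with $1-2k(1-\alpha)<\gamma<1$ and then swamps everything at speed $n^{1-2k(1-\alpha)}$. Moreover $S_n$ is lattice-valued and has no Lebesgue density at all: the exact density produced by the Hubbard--Stratonovich transformation (Lemma \ref{transformation}) is that of the \emph{convolution} $(S_n-nm)/n^{\alpha}+W/n^{\alpha-1/2}$, not of $(S_n-nm)/n^{\alpha}$. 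The paper's proof therefore has two steps that are both missing from your plan: a Laplace analysis of this exact density (via Lemma \ref{uniform convergence}) giving an MDP with rate $J(x)=\lambda x^{2k}/(2k)!$ for the convolution for \emph{every} $k$, and a transfer principle (Lemma \ref{transfer principle}) that deconvolves the independent Gaussian --- negligible for $k\geq 2$ because $1-2k(1-\alpha)<2\alpha-1$ (you state this inequality backwards), and for $k=1$ yielding $\sup_{x}\{\lambda x^{2}/2-\beta(x-y)^{2}/2\}=y^{2}/(2\sigma^{2})$ by Proposition A.1 of \cite{Eich2004}. Your substitute for this step, the weight $\exp(-\tfrac{\lambda}{2}n^{2\alpha-1}x^{2}+\tfrac{\beta}{2}n^{2\alpha-1}x^{2})$ corrected by an unspecified ``arsinh entropy term with coefficient $\beta^{-1}$'', is not coercive (since $\lambda<\beta$) and is precisely the double-counting problem you flag at the end but do not resolve.

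A second concrete problem is the role of hypothesis \eqref{eq:condition}. You invoke it to ensure that competing minima lie outside $[m-a,m+a]$; that is automatic for small $a$ because the minima of the real-analytic $G$ are isolated, and the conditioning already localizes $S_n/n$. Its actual function is in the conditioned transfer: one must show that conditioning on $\{S_n/n\in[m-a,m+a]\}$ and on $\{S_n/n+W/\sqrt{n}\in[m-a,m+a]\}$ are exponentially equivalent at speed $n^{1-2k(1-\alpha)}$, and when $m$ is only a local minimum the dangerous channel is $S_n/n$ sitting near a deeper minimum at distance about $b$ while the Gaussian bridges the gap, at cost $\beta b^{2}/2$, versus cost $h$ for staying near $m$; \eqref{eq:condition} is exactly what makes the second channel dominate (cf.\ Remark \ref{Remark:Condition}), and without this comparison the conditioned MDP for the transform cannot be pulled back to $S_n$. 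Two further points: your ``crude $e^{-cn}$ bound outside a compact set'' fails in the intermediate range $K<|s|\leq \delta n^{1-\alpha}$, where the exponent is only of order $n^{1-2k(1-\alpha)}$; one needs the uniform polynomial lower bound of Lemma \ref{uniform convergence} on the whole growing window, with $K$ chosen so that $\inf_{|s|>K}P_{2k}(s)$ exceeds $\inf_{B}J+1$. And the Taylor expansion must be applied to the random $G_n^{h}$, whose derivatives at $m$ do not vanish exactly; controlling $n^{(2k-i)(1-\alpha)}G_n^{h\,(i)}(m)$ via the Marcinkiewicz--Zygmund law of large numbers is exactly where the finite second moment and the bound $\alpha>1-1/(2(2k-1))$ enter --- this bound is strictly stronger than the requirement that the speed diverge, which would only need $\alpha>1-1/(2k)$ --- so your Borel--Cantelli remark points in the right direction but misses the scale that actually pins down the admissible range of $\alpha$.
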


\begin{remark}
If $m$ is a global minimum of $G$, then condition \eqref{eq:condition} holds trivially and there is no condition on the temperature whatsoever. The appearance of condition \eqref{eq:condition} is discussed in more detail in Remark \ref{Remark:Condition}.
\end{remark}

\begin{theorem}[MDP, unconditioned version]\label{unconditioned version}
Suppose that $\nu$ is pure and centered at $m$, where $k$ is the type and $\lambda$ the strength of $m$. Furthermore, suppose that $\nu$ has a finite second moment. Then, $\mu$-a.\,s.\ for every $1-1/\big(2(2k-1)\big)<\alpha<1$ the sequence of measures
\begin{equation*}
\left(P_{n,\beta}^\h \left(\frac{S_n-nm}{n^{\alpha}} \in \bullet \right)\right)_{n \in \N}
\end{equation*}
satisfies a moderate deviations principle with speed $n^{1-2k(1-\alpha)}$ and rate function I given by \eqref{eq:rate function}.
\end{theorem}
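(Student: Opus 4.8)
The plan is to \emph{derive the unconditioned statement from the conditioned one}, Theorem \ref{conditioned version}, using the large deviations principle of Theorem \ref{LDP} to show that the conditioning is irrelevant on the moderate deviations scale. Since $\nu$ is pure and centered at $m$, the point $m$ is the \emph{global} minimum of $G$, so its height vanishes, $h(m) = G(m) - \inf_w G(w) = 0$, and condition \eqref{eq:condition} reduces to $\beta > 0$, which holds by assumption. Hence Theorem \ref{conditioned version} is applicable: there is a $\mu$-null set off which we may fix some $0 < a < A(m)$ and conclude that, for every admissible $\alpha$, the conditioned laws $\mu_n^{B} \Defi P_{n,\beta}^\h\big((S_n - nm)/n^\alpha \in \bullet \,\big|\, S_n/n \in [m-a,m+a]\big)$ satisfy the asserted MDP with speed $b_n \Defi n^{1-2k(1-\alpha)}$ and rate function $I$. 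It therefore suffices to show that replacing $\mu_n^{B}$ by the unconditioned law $\mu_n \Defi P_{n,\beta}^\h\big((S_n-nm)/n^\alpha \in \bullet\big)$ changes neither the upper nor the lower bound of the MDP.

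The key estimate is that the conditioning event $B_n \Defi \{S_n/n \in [m-a,m+a]\}$ has overwhelming probability. By Theorem \ref{LDP}, $\mu$-a.s.\ the laws of $S_n/n$ satisfy an LDP with good rate function $I_\beta^\nu$; since the global minimizers of $I_\beta^\nu$ --- equivalently its zeros, as $\inf I_\beta^\nu = 0$ --- coincide with the global minima of $G$ (cf.\ Remark 3.2 in \cite{Loew2012}), purity of $\nu$ forces $\{I_\beta^\nu = 0\} = \{m\}$, so that $c_a \Defi \inf_{x \notin [m-a,m+a]} I_\beta^\nu(x) > 0$ (the infimum over this closed set is attained and positive by lower semicontinuity and compactness of the level sets of $I_\beta^\nu$). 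The LDP upper bound then yields $P_{n,\beta}^\h(B_n^c) \le e^{-c_a n/2}$ for all large $n$. Because $\alpha < 1$ we have $b_n = o(n)$, whence $b_n^{-1}\ln P_{n,\beta}^\h(B_n^c) \to -\infty$, i.e.\ $B_n^c$ is superexponentially negligible at speed $b_n$. Note also that $b_n \to \infty$: the hypothesis $\alpha > 1 - 1/(2(2k-1)) \ge 1 - 1/(2k)$ gives $1 - 2k(1-\alpha) > 0$.

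With this in hand the transfer is elementary. For every Borel set $C$,
\begin{equation*}
\mu_n^{B}(C)\,\big(1 - P_{n,\beta}^\h(B_n^c)\big) ~\le~ \mu_n^{B}(C)\,P_{n,\beta}^\h(B_n) ~\le~ \mu_n(C) ~\le~ \mu_n^{B}(C) + P_{n,\beta}^\h(B_n^c).
\end{equation*}
Applying $b_n^{-1}\ln(\cdot)$ and letting $n \to \infty$, the superexponential bound on $P_{n,\beta}^\h(B_n^c)$ kills that term in the upper estimate, while $b_n^{-1}\ln\big(1 - P_{n,\beta}^\h(B_n^c)\big) \to 0$ (using $b_n \to \infty$) disposes of it in the lower estimate; hence the $\limsup b_n^{-1}\ln(\cdot)$ over a closed $C$ and the $\liminf b_n^{-1}\ln(\cdot)$ over an open $C$ agree for $\mu_n$ and $\mu_n^{B}$. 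Therefore $(\mu_n)_n$ satisfies the same MDP as $(\mu_n^{B})_n$. Finally, intersecting the $\mu$-null exceptional sets of Theorems \ref{conditioned version} and \ref{LDP} gives a single $\mu$-null set off which the conclusion holds simultaneously for all admissible $\alpha$.

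The argument has no substantial obstacle beyond bookkeeping: the genuine analytic content --- the type-$k$ scaling, the Gaussian correction $\sigma^2 = \lambda^{-1} - \beta^{-1}$ when $k = 1$, and the good-rate-function property of $I$ --- is already packaged in Theorem \ref{conditioned version}. The only points deserving care are that the moderate-deviations speed $b_n$ really tends to infinity (this is precisely where the lower bound $\alpha > 1 - 1/(2(2k-1))$ is used, rather than merely $\alpha > 0$), and that the almost-sure statements of Theorems \ref{conditioned version} and \ref{LDP} are made to hold off a common $\mu$-null set, so that the exceptional set for the unconditioned MDP does not depend on $\alpha$. \eop
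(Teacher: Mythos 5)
Your proposal is correct, but it takes a genuinely different route from the paper. The paper never deduces the unconditioned statement from Theorem \ref{conditioned version} as a black box: it goes back to the Hubbard--Stratonovich transformed density of Lemma \ref{transformation}, reuses the windowed Laplace estimate \eqref{Kernaussage} established inside the proof of the conditioned theorem, and then shows via Lemma \ref{Lemma:deMatos}(ii) (which is exactly where purity, i.\,e.\ uniqueness of the global minimum, enters) that the integral over $|s|>an^{1-\alpha}$ is superexponentially negligible at speed $n^{1-2k(1-\alpha)}$; finally it transfers back with Lemma \ref{transfer principle}(i). You instead work directly at the level of the laws of $(S_n-nm)/n^{\alpha}$: since $m$ is the global minimum, the height vanishes and \eqref{eq:condition} is automatic, so Theorem \ref{conditioned version} applies for some fixed $a$, and the LDP of Theorem \ref{LDP} (finite second moment implies the needed first moment) together with uniqueness of the zero of $I_\beta^\nu$ makes $P_{n,\beta}^\h(S_n/n\notin[m-a,m+a])$ decay exponentially in $n$, hence superexponentially at the moderate speed $n^{1-2k(1-\alpha)}=o(n)$; your sandwich $\mu_n^B(C)P(B_n)\le\mu_n(C)\le\mu_n^B(C)+P(B_n^c)$ then transfers both MDP bounds, and your remarks on $b_n\to\infty$ and on intersecting the two exceptional null sets are exactly the right bookkeeping. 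What your route buys is modularity and brevity (no second pass through the transformed density, no separate use of Lemma \ref{transfer principle}(i)); what the paper's route buys is independence from the conditioned statement's formulation, since it only recycles the internal estimate \eqref{Kernaussage}. Both arguments use uniqueness of the global minimum at the same structural point, namely to kill the contribution away from $m$. One cosmetic slip: $\{x\notin[m-a,m+a]\}$ is open, so to invoke the LDP upper bound you should pass to the closed set $\{|x-m|\ge a\}$, on which the infimum of the good rate function $I_\beta^\nu$ is positive because its only zero is $m$; this changes nothing in substance.
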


\begin{remark}
As we have already mentioned in the introduction, one typically recovers the MDP rate function as the first term of the Taylor expansion of the logarithmic limiting density in the CLT. We find this behavior in the RFCW for minima of type 1. Indeed, for $\mu$-a.\,e.\ $h$ the MDP rate function is given by $x^2/(2 \sigma^2)$ and the limiting density in the CLT is Gaussian with variance $\sigma^2$. However, the RFCW reveals an atypical behavior for minima of larger types in the sense that the mentioned folklore does not hold anymore. If $k \geq 2$ the MDP rate function is proportional to $x^{2k}$ for $\mu$-a.\,e.\ $h$, whereas the limiting density in the CLT is proportional to $x^{2k-2} e^{-c x^{2(2k-1)}}$.
\end{remark}

\begin{remark}\label{remark}
Note that our unconditioned version of a MDP, Theorem \ref{unconditioned version}, is just stated for the case, where $G$ has a unique global minimum. In contrast, Theorem 1.18 in \cite{Eich2004} which describes MDPs for Curie-Weiss models is not restricted to this situation. However, the following Example \ref{counterexample} shows that Theorem 1.18 does not hold in the claimed generality.
\end{remark}

\begin{example}[Low-temperature Curie-Weiss model in the absence of an external field]\label{counterexample}
Take $\nu = \delta_0$ and $\beta > 1$. Let $m > 0$ be the unique positive solution of the fixed-point equation
\begin{equation*}
x ~=~ \tanh (\beta x)
\end{equation*}
Note that $m$ and $-m$ are the global minima of
\begin{equation*}
G(x) ~=~ \frac{\beta}{2}x^2 - \ln \cosh (\beta x),
\end{equation*}
both of type 1 and strength $\lambda = \beta -\beta^2(1-m^2)$. Theorem 1.18 (see also Example 2.1) in \cite{Eich2004} states that for $1/2 < \alpha < 1$
\begin{equation*}
\left(P_{n,\beta}^h \circ \left(\frac{S_n-nm}{n^{\alpha}}\right)^{-1}\right)_{n \in \N} \text{and } \left(P_{n,\beta}^h \circ \left(\frac{S_n+nm}{n^{\alpha}}\right)^{-1}\right)_{n \in \N}
\end{equation*}
satisfy MDPs, both with speed $n^{2\alpha-1}$ and rate function $I(x) = x^2/(2\sigma^2)$, where $\sigma^2 = (1-m^2)/(1-\beta(1-m^2))$. However, this is not true since the probability that $(S_n \boldsymbol{-} nm)/n^{\alpha}$ takes large values is \emph{not} exponentially small on the scale $n^{2\alpha-1}$, what is due to the fact that $(S_n \boldsymbol{+} nm)/n^{\alpha}$ does take small values with some probability. To be precise, the MDP for $S_n-nm/n^{\alpha}$ would imply
\begin{equation}\label{eq:counterexample}
\lim_{n \to \infty} \frac{1}{n^{2\alpha-1}}\ln P_{n,\beta}^h \left(\left|\frac{S_n-nm}{n^{\alpha}}\right| > 1 \right) ~=~ -\frac{1}{2 \sigma^2}.
\end{equation}
But, using the MDP for $S_n+nm/n^{\alpha}$, we see
\begin{eqnarray*}
0
&\geq& \lim_{n \to \infty} \frac{1}{n^{2\alpha-1}}\ln P_{n,\beta}^h \left(\left|\frac{S_n-nm}{n^{\alpha}}\right| > 1 \right) \\
&\geq& \lim_{n \to \infty} \frac{1}{n^{2\alpha-1}}\ln P_{n,\beta}^h \left(\frac{S_n-nm}{n^{\alpha}} < - 1 \right)\\
&\geq& \lim_{n \to \infty} \frac{1}{n^{2\alpha-1}}\ln P_{n,\beta}^h \left(\frac{S_n+nm}{n^{\alpha}} < 1 \right)\\
&=& - \inf_{x < 1} \frac{x^2}{2 \sigma^2}\\
&=& 0,
\end{eqnarray*}
which contradicts \eqref{eq:counterexample}.
\end{example}

\noindent The proofs of our main theorems use ideas that can also be found in \cite{Loew2012, Reic2012, Domb2009}. To put it roughly, the proof is mainly divided into four parts:

\begin{enumerate}
\item Firstly, we transform the measure of interest $P_{n,\beta}^h \circ ((S_n-nm)/n^{\alpha})^{-1}$ by the so-called Hubbard-Stratonovich transformation (see Lemma \ref{transformation}), i.\,e.\ we add a normal-distributed random variable $W$ and consider $P_{n,\beta}^h \circ ((S_n-nm)/n^{\alpha}+W/n^{\alpha-1/2})^{-1}$. The Hubbard-Stratonovich transformation is tailor-made for quadratic interactions such as the ones presented in \eqref{Hamiltonian}.
\item The main advantage of the transformation is that the latter measure is absolutely continuous with respect to Lebesgue measure on $\R$ and that its density is given explicitly (see \eqref{density}). Therefore, the proof of a MDP for $P_{n,\beta}^h \circ ((S_n-nm)/n^{\alpha}+W/n^{\alpha-1/2})^{-1}$ boils down to controlling integrals of exponential functions. As a first step we study these functions in two lemmata (see Lemma \ref{Lemma:deMatos} and Lemma \ref{uniform convergence}).
\item Using ideas of Laplace's method, which is concerned with the asymptotics of integrals of exponential functions, we can then prove a MDP for $P_{n,\beta}^h \circ ((S_n-nm)/n^{\alpha}+W/n^{\alpha-1/2})^{-1}$.
\item Finally, we pull back the result for $P_{n,\beta}^h \circ ((S_n-nm)/n^{\alpha}+W/n^{\alpha-1/2})^{-1}$ to a result for $P_{n,\beta}^h \circ ((S_n-nm)/n^{\alpha})^{-1}$ by means of a so-called {\it transfer principle} (Lemma \ref{transfer principle}).
\end{enumerate}

\section{Examples}\label{Examples}

Before we start with the preparation of the proofs we want to discuss two applications of our results.

\begin{example}[Curie-Weiss model with external field]
Consider the classical Curie-Weiss model with external field $h$, i.\,e.\ take $\nu = \delta_h$ for some $h \neq 0$. Let $m$ be the unique solution of the fixed-point equation
\begin{equation*}
x ~=~ \tanh \big(\beta (x +h )\big)
\end{equation*}
with $\sgn(m) = \sgn(h)$. Then $m$ is the unique minimum of
\begin{equation*}
G(x) ~=~ \frac{\beta}{2} x^2 - \ln \cosh [\beta (x + h)]
\end{equation*}
and it is of type 1 and strength
\begin{equation*}
G^{(2)}(m) ~=~ \beta - \beta^2(1-(\tanh(\beta(m +h )))^2) ~=~ \beta - \beta^2(1-m^2).
\end{equation*}
The model does not show phase transitions, the system is always in the paramagnetic phase. Theorem \ref{unconditioned version} yields that for $1/2 < \alpha < 1$ and all $\beta > 0$ the rescaled magnetization
\begin{equation*}
\frac{S_n-nm}{n^{\alpha}}
\end{equation*}
satisfies a moderate deviations principle with speed $n^{2\alpha-1}$ and rate function
\begin{equation*}
I(x) ~=~ \frac{x^2}{2 \sigma^2},
\end{equation*}
where $\sigma^2 = (1-m^2)/(1-\beta(1-m^2))$. Moreover, the same holds true for the conditional probabilities, i.\,e.\ using Theorem \ref{conditioned version} there exists $A > 0$ such that for all $0 < a < A, 1/2 < \alpha < 1$ and $\beta > 0$
\begin{equation*}
\left(P_{n,\beta}^h \left(\frac{S_n-nm}{n^{\alpha}} \in \bullet \,\bigg|\, \frac{S_n}{n} \in [m-a,m+a]\right)\right)_{n \in \N}
\end{equation*}
satisfies a moderate deviations principle with speed $n^{2\alpha-1}$ and rate function $I(x)$.
\end{example}

\begin{example}[Curie-Weiss model with dichotomous external field]
Consider the Curie-Weiss model with dichotomous external field, i.\,e.\ take $\nu = \frac12 (\delta_h + \delta_{-h})$ for some $h \in \R_{>0}$. In order to study the behaviour of the magnetization on a moderate deviations scale, we need to study the minima of
\begin{equation*}
G(x) ~=~ \frac{\beta}{2} x^2 -\frac12 \ln [\cosh(\beta(x+h))\cosh(\beta(x-h))].
\end{equation*}
This has already been done (see e.\,g.\ chapter 5 in \cite{Amar1992}) and in combination with Theorem \ref{conditioned version} and Theorem \ref{unconditioned version} we get the following:
\begin{itemize}
\item[(i)] If $h\geq 1/2$, then 0 is the only minimum of $G$ and 0 is a minimum of type 1 and strength $\lambda_1 = \beta-\beta^2(1-\tanh(\beta h)^2)$. The system is in the paramagnetic phase. Consequently, $\mu$-a.\,s.\ for $1/2 < \alpha <1$ the rescaled magnetization $$\frac{S_n}{n^{\alpha}}$$ satisfies a MDP with speed $n^{2\alpha-1}$ and rate function $I(x)= x^2/(2\sigma_1^2)$, where $\sigma_1^2 = (1-\tanh(\beta h)^2)/(1-\beta(1-\tanh(\beta h)^2))$. What is more, $\mu$-a.\,s.\ there exists $A > 0$ such that for all $0 < a < A$ and $1/2<\alpha<1$ $$\left(P_{n,\beta}^\h \left(\frac{S_n}{n^{\alpha}} \in \bullet \,\bigg|\, \frac{S_n}{n} \in [-a,a]\right)\right)_{n \in \N}$$ satisfies a MDP with speed $n^{2\alpha-1}$ and rate function $I(x)= x^2/(2\sigma_1^2)$.
\end{itemize}
If $h < 1/2$, then the situation is a bit more subtle. One finds a strictly increasing function $f: [0,1/2) \rightarrow \R$ with $f(0) = 1$ and $f(x_n) \rightarrow \infty$ as $x_n \nearrow 1/2$ such that:
\begin{itemize}
\item[(ii)] If $\beta < f(h)$, then the same result as in (i) holds.
\item[(iii)] If $\beta > f(h)$, then $G$ has two symmetric global minima $m$ and $-m$, where $m$ is the positive solution of the fixed-point equation
\begin{equation}\label{eq}
2 \, m ~=~ \tanh\big(\beta(m+h)\big) + \tanh\big(\beta(m-h)\big).
\end{equation}
The minima are of type 1 and strength $\lambda_2 = \beta-2m\beta^2(\tanh(2\beta m)^{-1}-m)$. The system is in the ferromagnetic phase. Therefore, $\mu$-a.\,s.\ there exists $A > 0$ such that for all $0 < a < A$ and $1/2< \alpha<1$
\begin{eqnarray*}
&\left(P_{n,\beta}^\h \left(\frac{S_n-nm}{n^{\alpha}} \in \bullet \,\bigg|\, \frac{S_n}{n} \in [m-a,m+a]\right)\right)_{n \in \N}& \text{ resp. }\\
&\left(P_{n,\beta}^\h \left(\frac{S_n+nm}{n^{\alpha}} \in \bullet \,\bigg|\, \frac{S_n}{n} \in [-m-a,-m+a]\right)\right)_{n \in \N}&
\end{eqnarray*}
satisfy MDPs, both with speed $n^{2\alpha-1}$ and rate function $I(x)= x^2/(2\sigma_2^2)$, where $\sigma_2^2 =2m (\tanh(2\beta m)^{-1}-1)/(2m\beta(\tanh(2\beta m)^{-1}-m)-1)$.
\item[(iv)] On the critical line $\beta = f(h)$ the situation is even more delicate. If $h < h_c \Defi \frac23 \arcosh \sqrt{3/2}$ we note a second-order phase transition. 0 is the only minimum of $G$, it is of type 2 and strength $\lambda_3 = 2 \beta^4(1-4\tanh(\beta h)^2+3\tanh(\beta h)^4)$. Thus, $\mu$-a.\,s\ there exists $A > 0$ such that for all $0 < a < A$ and $5/6 < \alpha <1$ $$\left(P_{n,\beta}^\h \left(\frac{S_n}{n^{\alpha}} \in \bullet \,\bigg|\, \frac{S_n}{n} \in [-a,a]\right)\right)_{n \in \N}$$ satisfies a MDP with speed $n^{4\alpha-3}$ and rate function $I(x)= \lambda_3 x^4/12$. At the tricritical point $h = h_c$ 0 is again the only minimum of $G$, but of type 3 and strength $\lambda_4 = 8 \beta^6(-2+17\tanh(\beta h)^2-30 \tanh(\beta h)^4+15 \tanh(\beta h)^8)$. Consequently, $\mu$-a.\,s.\ there exists $A > 0$ such that for all $0 < a < A$ and $9/10 < \alpha <1$ $$\left(P_{n,\beta}^\h \left(\frac{S_n}{n^{\alpha}} \in \bullet \,\bigg|\, \frac{S_n}{n} \in [-a,a]\right)\right)_{n \in \N}$$ satisfies a MDP with speed $n^{6\alpha-5}$ and rate function $I(x)= \lambda_4 x^6/720$. Finally, if $h > h_c$ we note a first-order phase transition. $G$ has three global minima, all of type 1. 0 is a minimum of strength $\lambda_1$ and $m$ resp $-m$, where $m$ is the positive solution of \eqref{eq}, are minima of strength $\lambda_2$. Therefore, $\mu$-a.\,s.\ there exists $A > 0$ such that for all $0 < a < A$ and $1/2 < \alpha <1$ $$\left(P_{n,\beta}^\h \left(\frac{S_n}{n^{\alpha}} \in \bullet \,\bigg|\, \frac{S_n}{n} \in [-a,a]\right)\right)_{n \in \N}$$ satisfies a MDP with speed $n^{2\alpha-1}$ and rate function $I(x)= x^2/(2\sigma_1^2)$, whereas
\begin{eqnarray*}
&\left(P_{n,\beta}^\h \left(\frac{S_n-nm}{n^{\alpha}} \in \bullet \,\bigg|\, \frac{S_n}{n} \in [m-a,m+a]\right)\right)_{n \in \N}& \text{ resp. }\\
&\left(P_{n,\beta}^\h \left(\frac{S_n+nm}{n^{\alpha}} \in \bullet \,\bigg|\, \frac{S_n}{n} \in [-m-a,-m+a]\right)\right)_{n \in \N}&
\end{eqnarray*}
satisfy MDPs with speed $n^{2\alpha-1}$ and rate function $I(x)= x^2/(2\sigma_2^2)$.
\end{itemize}

\noindent The same analysis could have even been done for a possibly unsymmetrical dichotomous external field by using results of \cite{Kuel2007}. Considering $\nu = t \delta_h +(1-t) \delta_{-h}$ for some $t \in [0,1]$ would have led to the study of
\begin{equation*}
G(x) ~=~ \frac{\beta}{2} x^2 - t \ln\big(\cosh [\beta (x + h)] \big) -(1-t) \ln \big(\cosh [\beta (x - h)]\big),
\end{equation*}
which can be found in Theorem 4.1 of \cite{Kuel2007}.

\end{example}

\section{Auxiliary results}\label{Auxiliary results}
In order to make the ideas of the proofs of our main results easier accessible, we state several, mainly technical, lemmas in this section. As we already mentioned, the first step of our proof is always to perform a transformation to gain better control of the measures of interest. This transformation is sometimes called Hubbard-Stratonovich transformation and it works the following way:

\begin{lemma}[cf.\ Lemma 2.3 in \cite{Amar1991}]\label{transformation}
Let $m \in \R$ and $\alpha \in (0,1)$ be real numbers. Then, for every realization $h = (\tilde{h}_i : i \in \N)$ and every $n \geq 1$, the random variable
\begin{equation*}
\frac{S_n -n m}{n^{\alpha}} + \frac{W}{n^{\alpha-\frac12}}
\end{equation*}
is, under the measure $P_{n,\beta}^h$, absolutely continuous with Lebesgue density given by
\begin{equation}\label{density}
\frac{e^{-nG_n^h(m+ n^{\alpha-1} \bullet)}}{\int_\R e^{-nG_n^h(m+n^{\alpha-1} s)}ds},
\end{equation}
where $G_n^h(x)$ is defined by
\begin{equation*}
G_n^h(x) ~\Defi~ \frac{\beta}{2}x^2 - \frac{1}{n} \sum_{i=1}^n \ln \cosh [\beta(x + \tilde{h}_i)]
\end{equation*}
and $W$ is a Gaussian random variable with mean zero and variance $\beta^{-1}$, which is defined on (an extension of) $(\{\pm1\}^n, \mathcal{P}(\{\pm1\}^n),P_{n,\beta}^h)$ and which is independent of the sequence $(S_n)_{n \in \N}$.
\end{lemma}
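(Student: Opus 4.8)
The plan is to compute the characteristic function (or, equivalently, the moment generating function) of the transformed random variable under $P_{n,\beta}^h$ and to recognize it as the Fourier (Laplace) transform of the claimed density. First I would fix a realization $h=(\tilde h_i)$ and recall that, by construction, $W$ is independent of $(S_n)$ with $W\sim\mathcal N(0,\beta^{-1})$. Hence for $t\in\R$,
\begin{equation*}
\E_{P_{n,\beta}^h}\!\left[\exp\!\left(t\,\frac{S_n-nm}{n^{\alpha}}+t\,\frac{W}{n^{\alpha-1/2}}\right)\right]
~=~\E_{P_{n,\beta}^h}\!\left[\exp\!\left(t\,\frac{S_n-nm}{n^{\alpha}}\right)\right]\cdot\exp\!\left(\frac{t^2}{2\beta\,n^{2\alpha-1}}\right),
\end{equation*}
since the MGF of $W/n^{\alpha-1/2}$ at $t$ is $\exp\!\big(t^2/(2\beta n^{2\alpha-1})\big)$. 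The key point is that this Gaussian factor is exactly what is needed to linearize the quadratic term $\exp(\beta S_n^2/(2n))$ appearing in $P_{n,\beta}^h$: writing $t=n^{\alpha-1}\theta$ and using the elementary Gaussian identity $\exp(a^2/2)=\int_\R\exp(ax)\,\gamma(dx)$ with an appropriate scaling, the product $e^{\beta S_n^2/(2n)}\cdot e^{t^2/(2\beta n^{2\alpha-1})}$ can be absorbed so that the dependence on $\sigma$ becomes a product over the sites.

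Concretely, the next step is to expand $Z_{n,\beta}^h\cdot(\text{transformed MGF})$ by summing over $\sigma\in\{\pm1\}^n$. After the Gaussian linearization the sum over $\sigma$ factorizes and each factor contributes $2\cosh[\beta(x+\tilde h_i)]$ for the relevant integration variable $x$; collecting the prefactors $\exp(\beta n x^2/2)$ and $\exp(n^{\alpha-1}\theta\cdot(\text{shift}))$ and changing variables so that the integration variable becomes $s$ with $x=m+n^{\alpha-1}s$, one obtains precisely
\begin{equation*}
\E_{P_{n,\beta}^h}\!\left[\exp\!\left(n^{\alpha-1}\theta\Big(\tfrac{S_n-nm}{n^{\alpha}}+\tfrac{W}{n^{\alpha-1/2}}\Big)\right)\right]
~=~\frac{\int_\R e^{n^{\alpha-1}\theta s}\,e^{-nG_n^h(m+n^{\alpha-1}s)}\,ds}{\int_\R e^{-nG_n^h(m+n^{\alpha-1}s)}\,ds},
\end{equation*}
which identifies the distribution as having the density \eqref{density}. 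One should check that the denominator is finite, which follows from $G_n^h(x)\ge\frac{\beta}{2}x^2-\beta|x|-\frac1n\sum_i\beta|\tilde h_i|-\ln 2\to\infty$ as $|x|\to\infty$, so the Gaussian tail of $e^{-nG_n^h}$ guarantees integrability; thus the density is well defined and normalized.

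The main obstacle is purely bookkeeping: tracking the various powers of $n$ through the Gaussian linearization and the change of variables $x=m+n^{\alpha-1}s$ so that the shift by $-nm$ and the rescaling by $n^{\alpha}$ come out correctly, and making sure the auxiliary Gaussian one uses to linearize $e^{\beta S_n^2/(2n)}$ is consistently the same $W$ (up to an affine transformation) that was added to $S_n$. Once the algebra is arranged so that the quadratic term is replaced by $\int\exp(\beta n x S_n/(1)\,-\,\beta n x^2/2)\,dx$-type integrals, everything collapses to a product over sites; alternatively, since this is essentially Lemma 2.3 of \cite{Amar1991}, one may simply cite that computation and note that our statement is the version rescaled by $n^{\alpha-1}$ and shifted by $m$, which changes nothing essential in the argument. \eop
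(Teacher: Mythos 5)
Your strategy is sound and the identity you aim for is the right one, but note that the paper itself gives no proof: it simply cites Lemma~2.3 of Amaro de Matos--Perez, and the computation behind that citation is a direct convolution-of-densities argument rather than an MGF identification. Since $W$ is independent of $\sigma$ under $P_{n,\beta}^h$ and $W/\sqrt n\sim\mathcal N\big(0,(\beta n)^{-1}\big)$, one computes directly
\begin{align*}
P_{n,\beta}^h\Big(\tfrac{S_n}{n}+\tfrac{W}{\sqrt n}\in dx\Big)
&=\frac{1}{Z_{n,\beta}^h}\sum_{\sigma\in\{\pm1\}^n}\exp\Big(\tfrac{\beta}{2n}S_n^2+\beta\sum_{i=1}^n\tilde h_i\sigma_i\Big)\sqrt{\tfrac{n\beta}{2\pi}}\,e^{-\frac{n\beta}{2}\left(x-S_n/n\right)^2}dx\\
&=\frac{\sqrt{n\beta/(2\pi)}\;2^n}{Z_{n,\beta}^h}\,e^{-nG_n^h(x)}\,dx,
\end{align*}
because the cross term $\beta xS_n$ in the exponent cancels $\tfrac{\beta}{2n}S_n^2$ exactly and the remaining sum over $\sigma$ factorizes into $\prod_i 2\cosh[\beta(x+\tilde h_i)]$; the stated density \eqref{density} then follows from the affine change of variables $x=m+n^{\alpha-1}s$, since $(S_n-nm)/n^{\alpha}+W/n^{\alpha-1/2}=n^{1-\alpha}\big(S_n/n+W/\sqrt n-m\big)$. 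Your MGF route reaches the same conclusion, but it costs an extra ingredient: you must invoke that an everywhere-finite moment generating function determines the law (harmless here, since $S_n$ is bounded, $W$ is Gaussian, and the claimed density has Gaussian tails as your integrability estimate shows), whereas the convolution computation produces the density outright for every fixed $n$ and $h$. One small conceptual correction to your narrative: the factor $e^{t^2/(2\beta n^{2\alpha-1})}$ contributed by $W$ does not itself linearize $e^{\beta S_n^2/(2n)}$; the linearization is performed by the auxiliary Hubbard--Stratonovich identity $e^{\beta S_n^2/(2n)}=\sqrt{n\beta/(2\pi)}\int_\R e^{-n\beta x^2/2+\beta xS_n}\,dx$, and the $W$-factor is precisely what cancels the residual quadratic-in-$t$ term created by the shift $x\mapsto x+t/(\beta n^{\alpha})$ when completing the square. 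With that reading your bookkeeping does close, and your closing observation that one may simply cite \cite{Amar1991} after rescaling is exactly what the paper does.
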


\noindent As we are not interested in the moderate deviations behavior of $(S_n -n m)/n^{\alpha} + W/n^{\alpha-1/2}$ we need to find a way how to compensate for the convolution. The following {\it transfer principle} will tell us how to get a MDP for $(S_n -n m)/n^{\alpha}$ from a MDP for its Hubbard-Stratonovich transform:

\begin{lemma}\label{transfer principle}
Let $m$ be a (local or global) minimum of $G$ and let $m$ be of type $k$ and strength $\lambda$.
\begin{itemize}
\item[(i)] Suppose that $\mu$-a.\,s. $$\left(P_{n,\beta}^\h \circ \left(\frac{S_n-nm}{n^{\alpha}} + \frac{W}{n^{\alpha-\frac12}}\right)^{-1}\right)_{n \in \N}$$
satisfies a moderate deviations principle with speed $n^{1-2k(1-\alpha)}$ and rate function \begin{equation}\label{eq:rate function J}
J(x) ~\Defi~ J_{\lambda, k}(x) ~\Defi~ \frac{\lambda x^{2k}}{(2k)!}.
\end{equation}
Then, $\mu$-a.\,s. $$\left(P_{n,\beta}^\h \circ \left(\frac{S_n-nm}{n^{\alpha}}\right)^{-1}\right)_{n \in \N}$$ satisfies a moderate deviations principle with speed $n^{1-2k(1-\alpha)}$ and rate function $I$ given by \eqref{eq:rate function}.
\item[(ii)] Suppose that \eqref{eq:condition}\ holds, i.\,e.
\begin{equation*}
\beta ~>~ \frac{2 h}{b^2}.
\end{equation*}
Let $c$ be the supremum of all $x \in (0, (b  -\sqrt{2 h/\beta})/2]$ such that $m$ is the only minimum of $G$ in $[m-x,m+x]$ and fix $0<a<c$. Suppose that $\mu$-a.\,s.
\begin{equation*}
\left(P_{n,\beta}^\h \left(\frac{S_n-n m}{n^{\alpha}} + \frac{W}{n^{\alpha-\frac12}} \in \bullet \Big|\, \frac{S_n-n m}{n^{\alpha}} + \frac{W}{n^{\alpha-\frac12}} \in [-a n^{1-\alpha},a n^{1-\alpha}]\right)\right)_{n \in \N}
\end{equation*}
satisfies a moderate deviations principle with speed $n^{1-2k(1-\alpha)}$ and rate function $J$ given by \eqref{eq:rate function J}. Then, $\mu$-a.\,s.
\begin{equation*}
\left(P_{n,\beta}^\h \left(\frac{S_n-n m}{n^{\alpha}} \in \bullet \Big|\, \frac{S_n}{n} \in [m-a,m+a]\right)\right)_{n \in \N}
\end{equation*}
satisfies a moderate deviations principle with speed $n^{1-2k(1-\alpha)}$ and rate function $I$ given by \eqref{eq:rate function}.
\end{itemize}
\end{lemma}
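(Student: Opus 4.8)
\noindent The plan is to deduce both statements from three standard tools — exponential equivalence, Varadhan's lemma and the Gärtner--Ellis theorem — applied to $X_n\Defi(S_n-nm)/n^{\alpha}$ and to its Gaussian perturbation $T_n\Defi X_n+V_n$, where $V_n\Defi Wn^{1/2-\alpha}$ is centered Gaussian of variance $\beta^{-1}n^{1-2\alpha}$, independent of $(S_n)_n$ and of $\h$. Put $a_n\Defi n^{1-2k(1-\alpha)}$. The whole dichotomy is that $\mathrm{Var}(V_n)=\beta^{-1}a_n^{-1}$ when $k=1$, whereas $\mathrm{Var}(V_n)=o(a_n^{-1})$ when $k\ge2$ (because $2(k-1)(1-\alpha)>0$); so the Gaussian is negligible on the MDP scale precisely when $k\ge2$.

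\noindent \textit{Proof of (i).} If $k\ge2$: from $\P(|V_n|\ge\delta)\le2\exp(-\tfrac\beta2\delta^{2}n^{2\alpha-1})$ and $2\alpha-1>1-2k(1-\alpha)$ we get $a_n^{-1}\ln\P(|V_n|\ge\delta)\to-\infty$ for all $\delta>0$, so $X_n$ and $T_n$ are exponentially equivalent at speed $a_n$; since $J$ in \eqref{eq:rate function J} has compact level sets, \cite[Thm.~4.2.13]{Demb1998} transfers the MDP, and $J$ coincides with $I$ in \eqref{eq:rate function} for $k\ge2$. If $k=1$ we must deconvolve. As $J(x)=\lambda x^{2}/2$ is a good, superlinear rate function, the moment condition $\limsup_n a_n^{-1}\ln\E_{P_{n,\beta}^{\h}}[e^{\gamma a_n tT_n}]<\infty$ ($\gamma>1$) required by Varadhan's lemma holds (alternatively this follows from the exponential concentration of $S_n/n$ in Theorem \ref{LDP}), hence $\lim_n a_n^{-1}\ln\E_{P_{n,\beta}^{\h}}[e^{a_ntT_n}]=J^{*}(t)=t^{2}/(2\lambda)$. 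By independence of $V_n$ and $S_n$, $\E[e^{a_ntT_n}]=\E_{P_{n,\beta}^{\h}}[e^{a_ntX_n}]\,e^{a_nt^{2}/(2\beta)}$, so $\lim_n a_n^{-1}\ln\E_{P_{n,\beta}^{\h}}[e^{a_ntX_n}]=t^{2}/(2\lambda)-t^{2}/(2\beta)=\sigma^{2}t^{2}/2=:\Lambda(t)$; $\Lambda$ is finite and differentiable on $\R$, so the Gärtner--Ellis theorem \cite[Thm.~2.3.6]{Demb1998} yields the MDP for $X_n$ at speed $a_n$ with rate $\Lambda^{*}(x)=x^{2}/(2\sigma^{2})=I(x)$. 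Nothing here depends on $\h$ except through the hypothesis and Theorem \ref{LDP}, both of which hold $\mu$-a.s., so the conclusion does too.

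\noindent \textit{Proof of (ii).} Now the hypothesis conditions on $\{T_n\in[-an^{1-\alpha},an^{1-\alpha}]\}=\{S_n/n+W/\sqrt n\in[m-a,m+a]\}$ while the conclusion conditions on $\{S_n/n\in[m-a,m+a]\}$, and, unlike in part (i), these events may have probability only $e^{-nh+o(n)}$. \emph{Step 1: the two conditioning schemes are logarithmically interchangeable at speed $a_n$}, i.e. for every Borel $A$ the probabilities $\P(S_n/n\in A\cap[m-a,m+a])$ and $\P(S_n/n+W/\sqrt n\in A\cap[m-a,m+a])$ differ only by a factor $e^{o(a_n)}$; and the same holds for the exponentially tilted versions $\E[e^{a_ntX_n}\mathbf 1_{(\cdot)}]$. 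Indeed, the symmetric difference of the underlying events is contained in the set where $S_n/n$ is at distance $\delta>0$ from the window (or its boundary) \emph{and} $|W|/\sqrt n\ge\delta$; using independence and summing $P_{n,\beta}^{\h}(S_n=j)\le e^{-nI_\beta^{\nu}(j/n)+o(n)}$ (Theorem \ref{LDP}) against $\P(|W|/\sqrt n\ge\delta)\le e^{-\beta\delta^{2}n/2}$ over $\delta$ produces the exponent $\inf_{\delta>0}\{I_\beta^{\nu}(m+a+\delta)+\tfrac\beta2\delta^{2}\}$ (and its left-hand analogue), which — since $I_\beta^{\nu}>h$ on $(m,m+b)$ and $I_\beta^{\nu}\ge0$ everywhere, together with $b>\sqrt{2h/\beta}$ (this is \eqref{eq:condition}) and $a<c\le(b-\sqrt{2h/\beta})/2$ — is strictly larger than $h=I_\beta^{\nu}(m)=\inf_{[m-a,m+a]}I_\beta^{\nu}$, the value governing $\P(S_n/n\in[m-a,m+a])=e^{-nh+o(n)}$ ($m$ being the unique minimiser of $I_\beta^{\nu}$ there by the choice of $c$). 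Thus the leakage is super-exponentially small relative to the events themselves; applied in its $\rho\,n^{1-\alpha}$-fattened form this is precisely the estimate that singles out the admissible range of $a$. \emph{Step 2} then repeats part (i) for the conditional laws: for $k\ge2$, combining the exponential equivalence of $X_n,T_n$ with Step 1 shows that $(X_n\mid S_n/n\in[m-a,m+a])$ has the same MDP as $(T_n\mid T_n\in[-an^{1-\alpha},an^{1-\alpha}])$, i.e. rate $J=I$; for $k=1$, Varadhan applied to the latter conditional law (its moment condition holds because the MDP concentrates it near $0$, so the values of size $\asymp n^{1-\alpha}$ in its support carry negligible mass) gives conditional SCGF $J^{*}(t)=t^{2}/(2\lambda)$, Step 1 (in tilted form) and the independence of $V_n$ turn this into the conditional SCGF $t^{2}/(2\lambda)-t^{2}/(2\beta)=\sigma^{2}t^{2}/2$ of $(X_n\mid S_n/n\in[m-a,m+a])$, and Gärtner--Ellis delivers the MDP with rate $I$.

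\noindent \textit{Main obstacle.} The only genuinely delicate point is $k=1$, where the Gaussian perturbation lives exactly on the MDP scale, so that the rate function really changes from $J$ to the deconvolution $I$ — this rules out a soft comparison and forces the detour through Varadhan's lemma (hence through a moment/exponential-tightness estimate) and Gärtner--Ellis. In part (ii) the compounding difficulty is the comparison of the two conditioning events when $h>0$: both have probability only $e^{-nh+o(n)}$, which is super-exponentially small at speed $a_n$, so one cannot compare $\{S_n/n\in[m-a,m+a]\}$ with $\{S_n/n+W/\sqrt n\in[m-a,m+a]\}$ by a fixed-width fattening, but must integrate the leakage over the displacement and exploit the quantitative separation encoded in \eqref{eq:condition} and $a<c$. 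This estimate is where most of the technical weight of the lemma lies.
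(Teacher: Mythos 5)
Your handling of $k\ge2$ (exponential equivalence of $X_n$ and $T_n$, transferred to the conditional laws via the independence of $W$) is correct and is essentially the paper's route, which cites Lemma 3.4 of \cite{Eich2004}. The genuine gap is the $k=1$ deconvolution, in both (i) and (ii): Varadhan's lemma requires the moment condition $\limsup_n a_n^{-1}\ln\E[e^{\gamma a_n t T_n}]<\infty$, and neither of your justifications delivers it. It is not implied by the assumed MDP: perturb the law of $T_n$ by placing mass $e^{-n^{\gamma}}$, with $2\alpha-1<\gamma<\alpha$, at the point $n^{1-\alpha}$; this perturbation is invisible at speed $a_n=n^{2\alpha-1}$, so the MDP with rate $J$ persists, yet for $t>0$ one has $a_n^{-1}\ln\E[e^{a_n tT_n}]\ge a_n^{-1}(tn^{\alpha}-n^{\gamma})\to\infty$. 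Nor does it follow from Theorem \ref{LDP}: the LDP operates at speed $n$ with $e^{o(n)}$ errors, which swamp anything of size $e^{O(n^{2\alpha-1})}$, and the dangerous region is $M\le |T_n|\le a n^{1-\alpha}$, where the tilt can reach $e^{|t|an^{\alpha}}$ while the LDP gives no useful bound on $P(|T_n|\ge M)$ (the corresponding deviation of $S_n/n$ is only of order $Mn^{\alpha-1}\to0$). The same issue recurs verbatim in your Step 2 for $k=1$: ``values of size $\asymp n^{1-\alpha}$ carry negligible mass'' is exactly a tail bound uniform over windows growing with $n$, which the conditional MDP alone does not provide. The paper never computes SCGFs; it invokes the Gaussian transfer results Proposition A.1 ($k=1$) and Lemma 3.4 ($k\ge2$) of \cite{Eich2004}, which exploit the explicit Gaussian structure of $W$ and need no exponential-moment hypothesis, and the uniform control on $[-an^{1-\alpha},an^{1-\alpha}]$ that your route is missing enters only later, in the proof of the MDP hypothesis itself, through Lemma \ref{uniform convergence}(ii). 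To close your argument you would have to add such a uniform tail estimate as an extra hypothesis or replace Varadhan/G\"artner--Ellis by a deconvolution argument of that type.

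In (ii), Step 1 is also misstated. It is false that for every Borel $A$ the probabilities $P_{n,\beta}^h(S_n/n\in A\cap[m-a,m+a])$ and $P_{n,\beta}^h(S_n/n+W/\sqrt n\in A\cap[m-a,m+a])$ agree up to a factor $e^{o(a_n)}$: $S_n/n$ is lattice-valued while $S_n/n+W/\sqrt n$ has a density, so taking $A$ to be the support lattice of $S_n/n$ makes the first probability of order $e^{-nh+o(n)}$ and the second zero. What is needed---and what the paper proves---is an additive bound, uniform over Borel sets, on the difference of the two conditional laws of one and the same variable $(S_n-nm)/n^{\alpha}+W/n^{\alpha-1/2}$ (conditioned on $X_n\in B_n$ versus on $X_n+Y_n\in B_n$), shown to be superexponentially small at speed $n^{1-2k(1-\alpha)}$. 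The paper achieves this with three terms: a Gaussian tail at scale $n^{(1-\alpha)/2}$; the mismatch of the two conditioning probabilities, estimated at speed $n$ with $\delta=b-c$, so that \eqref{eq:condition} and $a<c\le(b-\sqrt{2h/\beta})/2$ yield $-\beta\delta^2/2+h<0$; and a boundary-shell term, estimated after enlarging $a$ to $\tilde a$ (possible by the choice of $c$) so that the LDP exponent of the shell strictly exceeds $h$. Your leakage computation has the right ingredients (speed-$n$ LDP bounds beating the speed-$a_n$ requirement, the role of \eqref{eq:condition} and $a<c$), but it must be organised as this comparison of conditional distributions; note also that on $(m,m+b)$ one only has $I_\beta^{\nu}\ge h$ in general, and the strictness you need on the compact shells is precisely what the choice of $\tilde a$ provides. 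Finally, even after repairing Step 1, passing from the law conditioned on $\{X_n\in B_n\}$ to the conclusion again requires the $k=1$ deconvolution discussed above.
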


\begin{proof}
ad (i): Suppose that $h$ is such that $$\left(P_{n,\beta}^h \circ \left(\frac{S_n-nm}{n^{\alpha}} + \frac{W}{n^{\alpha-\frac12}}\right)^{-1}\right)_{n \in \N}$$ satisfies a moderate deviations principle with speed $n^{1-2k(1-\alpha)}$ and rate function $J$. Let us first consider the case $k=1$. Using Proposition A.1 from \cite{Eich2004} we see that $(P_{n,\beta}^h \circ ((S_n-nm)/n^{\alpha})^{-1})_{n \in \N}$ satisfies a MDP with speed $n^{2\alpha-1}$ and rate function given by
\begin{equation*}
y ~\mapsto~ \sup_{x \in \R}\left(\frac{\lambda x^2}{2}-\frac{\beta (x-y)^2}{2}\right) ~=~ \frac{y^2}{2 \sigma^2} ~=~ I(y),
\end{equation*}
where $\sigma^2\Defi \lambda^{-1} - \beta^{-1}$. If $k \geq 2$ we have $1-2k(1-\alpha)<2\alpha-1$ and the influence of the Gaussian random variable vanishes for $n \to \infty$. Using Lemma 3.4 from \cite{Eich2004} we see that $(P_{n,\beta}^h \circ ((S_n-nm)/n^{\alpha})^{-1})_{n \in \N}$  satisfies a MDP with speed $n^{1-2k(1-\alpha)}$ and rate function $J(y) = I(y)$.\\
\noindent ad (ii): Let $X_n \Defi (S_n-nm)/n^{\alpha}, Y_n \Defi W/n^{\alpha-1/2}$ and $B_n \Defi [-a n^{1-\alpha},a n^{1-\alpha}].$ Choose $h$ such that
\begin{equation*}
\left(P_{n,\beta}^h \left(\frac{S_n}{n} + \frac{W}{\sqrt{n}} \in \bullet \right)\right)_{n \in \N} \text{ resp.} \left(P_{n,\beta}^h \left(\frac{S_n}{n} \in \bullet \right)\right)_{n \in \N}
\end{equation*}
satisfy LDPs with speed $n$ and rate function $I_{\beta}^{\nu}$ resp. $G(x)-\inf_{w \in \R} G(w)$. This can be done with probability 1 due to Theorem 3.3 in \cite{Loew2012} and its proof. Moreover, choose $h$ such that
\begin{equation*}
\left(P_{n,\beta}^h (X_n + Y_n \in \bullet \Big|\, X_n +Y_n \in B_n)\right)_{n \in \N}
\end{equation*}
satisfies a moderate deviations principle with speed $n^{1-2k(1-\alpha)}$ and rate function $J$, which can be done with probability 1 by assumption. We start by showing that the same MDP holds for
\begin{equation*}
\left(P_{n,\beta}^h (X_n + Y_n \in \bullet \Big|\, X_n \in B_n)\right)_{n \in \N}.
\end{equation*}
In order to do this, we show that the two sequences are exponentially equivalent on the scale $n^{1-2k(1-\alpha)}$, i.\,e.
\begin{equation}\label{eq:exp}
\limsup_{n \to \infty} \frac{1}{n^{1-2k(1-\alpha)}}\log \rho_n ~=~ -\infty,
\end{equation}
where
\begin{equation*}
\rho_n ~\Defi~ \sup_{B \in \mathcal{B}(\R)}\big\{P_{n,\beta}^h (X_n +Y_n \in B |\, X_n \in B_n)-P_{n,\beta}^h (X_n + Y_n \in B |\, X_n + Y_n\in B_n)\big\}.
\end{equation*}
Note that for every $B \in \mathcal{B}(\R)$
\begin{eqnarray*}
&& P_{n,\beta}^h (X_n + Y_n \in B |\, X_n \in B_n)-P_{n,\beta}^h (X_n + Y_n \in B |\, X_n +Y_n \in B_n) \\
&\leq& P_{n,\beta}^h (|Y_n| > n^{(1-\alpha)/2}) + P_{n,\beta}^h (X_n + Y_n \in B,|Y_n| \leq n^{(1-\alpha)/2} |\, X_n \in B_n)\\
&& -P_{n,\beta}^h (X_n + Y_n \in B |\, X_n +Y_n \in B_n)\\
&=& P_{n,\beta}^h (|Y_n| > n^{(1-\alpha)/2})+ \left(\frac{1}{P_{n,\beta}^h (X_n \in B_n)}-\frac{1}{P_{n,\beta}^h (X_n +Y_n \in B_n)}\right) \\
&& \times \ P_{n,\beta}^h (X_n + Y_n \in B, X_n \in B_n, |Y_n| \leq n^{(1-\alpha)/2})\\
&& + \frac{P_{n,\beta}^h (X_n + Y_n \in B, X_n \in B_n, |Y_n| \leq n^{(1-\alpha)/2}) - P_{n,\beta}^h (X_n + Y_n \in B \cap B_n)}{P_{n,\beta}^h (X_n +Y_n \in B_n)}
\end{eqnarray*}
and consequently $\rho_n$ is bounded by
\begin{eqnarray*}
&& P_{n,\beta}^h (|Y_n| > n^{(1-\alpha)/2}) + \frac{P_{n,\beta}^h (X_n +Y_n \in B_n)-P_{n,\beta}^h (X_n\in B_n)}{P_{n,\beta}^h (X_n +Y_n \in B_n)}\vee0\\
&& + \frac{P_{n,\beta}^h (X_n + Y_n \in[-an^{1-\alpha}-n^{(1-\alpha)/2},-an^{1-\alpha}]\cup[an^{1-\alpha},an^{1-\alpha}+n^{(1-\alpha)/2}])}{P_{n,\beta}^h (X_n +Y_n \in B_n)}.
\end{eqnarray*}
Using Lemma 1.2.15 from \cite{Demb1998} \eqref{eq:exp} follows from proving that every of the three summands converges to $-\infty$ on a logarithmic scale of order $n^{1-2k(1-\alpha)}$.

\noindent First,
\begin{equation*}
\limsup_{n \to \infty} \frac{1}{n^{1-2k(1-\alpha)}}\log P_{n,\beta}^h (|Y_n| > n^{(1-\alpha)/2}) ~=~ -\infty
\end{equation*}
follows immediately from the standard estimate
\begin{equation*}
\P( Z > x) ~\leq~ \frac{1}{\sqrt{2\pi}x}e^{-\frac12 x^2}
\end{equation*}
for a standard Gaussian $Z, x>0$.

\noindent Second, with $\delta = b - c$ it is
\begin{eqnarray*}
&& \frac{P_{n,\beta}^h (X_n +Y_n \in B_n)-P_{n,\beta}^h (X_n\in B_n)}{P_{n,\beta}^h (X_n +Y_n \in B_n)}\vee0\\
&=& \frac{P_{n,\beta}^h (S_n/n +W/\sqrt{n} \in [m-a,m+a])-P_{n,\beta}^h (S_n/n\in [m-a,m+a])}{P_{n,\beta}^h (S_n/n +W/\sqrt{n} \in [m-a,m+a])}\vee0\\
&\leq& \frac{P_{n,\beta}^h (S_n/n \in [m-a-\delta,m-a]\cup[m+a,m+a+\delta])}{P_{n,\beta}^h (S_n/n +W/\sqrt{n} \in [m-a,m+a])}\\
&& + \frac{P_{n,\beta}^h (|W/\sqrt{n}|>\delta)}{P_{n,\beta}^h (S_n/n +W/\sqrt{n} \in [m-a,m+a])}.
\end{eqnarray*}
Using Lemma 1.2.15 in \cite{Demb1998} we can again consider the two terms separately. We find
\begin{eqnarray*}
&&\lim_{n \to \infty} \frac{1}{n}\log \frac{P_{n,\beta}^h (S_n/n \in [m-a-\delta,m-a]\cup[m+a,m+a+\delta])}{P_{n,\beta}^h (S_n/n +W/\sqrt{n} \in [m-a,m+a])}\\
&=& -\inf_{x \in [m-a-\delta,m-a]\cup[m+a,m+a+\delta]}I_{\beta}^{\nu}(x)+\inf_{x \in [m-a,m+a]}G(x) - \inf_{x\in\R}G(x)\\
&\leq& -\inf_{x \in [m-a-\delta,m-a]\cup[m+a,m+a+\delta]}G(x)+G(m)\\
&<& 0
\end{eqnarray*}
and
\begin{eqnarray*}
&&\lim_{n \to \infty} \frac{1}{n}\log \frac{P_{n,\beta}^h (|W/\sqrt{n}|>\delta)}{P_{n,\beta}^h (S_n/n +W/\sqrt{n} \in [m-a,m+a])}\\
&=& -\frac{\beta \delta^2}{2} +\inf_{x \in [m-a,m+a]}G(x) - \inf_{x\in\R}G(x)\\
&=& -\frac{\beta \delta^2}{2} + h\\
&<& 0.
\end{eqnarray*}
Consequently,
\begin{equation*}
\limsup_{n \to \infty} \frac{1}{n^{1-2k(1-\alpha)}}\log \frac{P_{n,\beta}^h (X_n +Y_n \in B_n)-P_{n,\beta}^h (X_n\in B_n)}{P_{n,\beta}^h (X_n +Y_n \in B_n)}\vee0 ~=~ -\infty.
\end{equation*}

\noindent Finally, since $m$ is the only minimum of $G$ in $[m-a,m+a]$ we can choose $\tilde{a} > a$ such that $m$ is also the only minimum of $G$ in $[m-\tilde{a},m+\tilde{a}]$. Note that
\begin{eqnarray*}
&& \limsup_{n \to \infty} \frac{1}{n}\log P_{n,\beta}^h (X_n + Y_n \in[-an^{1-\alpha}-n^{(1-\alpha)/2},-an^{1-\alpha}]\\
&& \cup[an^{1-\alpha},an^{1-\alpha}+n^{(1-\alpha)/2}])\\
&\leq& \limsup_{n \to \infty} \frac{1}{n}\log P_{n,\beta}^h (S_n/n +W/\sqrt{n} \in [m-\tilde{a},m-a]\cup[m+a,m+\tilde{a}])\\
&=& -\inf_{x \in [m-\tilde{a},m-a]\cup[m+a,m+\tilde{a}]}G(x)+\inf_{x\in\R}G(x)\\
&<& -G(m)+\inf_{x\in\R}G(x)\\
&=& -\inf_{x \in [m-a,m+a]}G(x) + \inf_{x\in\R}G(x)\\
&=& \lim_{n \to \infty} \frac{1}{n} \log P_{n,\beta}^h (S_n/n + W/\sqrt{n} \in [m-a,m+a])\\
&=& \lim_{n \to \infty} \frac{1}{n} \log P_{n,\beta}^h (X_n + Y_n \in B_n)
\end{eqnarray*}
and therefore \eqref{eq:exp} follows.

\noindent Now that we know that
\begin{equation*}
\left(P_{n,\beta}^h (X_n + Y_n \in \bullet \Big|\, X_n \in B_n)\right)_{n \in \N}
\end{equation*}
satisfies a moderate deviations principle with speed $n^{1-2k(1-\alpha)}$ and rate function $J$, we can easily deduce the assertion of (ii) by using slight generalizations of Proposition A.1 ($k =1$) resp. Lemma 3.4 ($k \geq 2$) in \cite{Eich2004}.
\end{proof}

\begin{remark}\label{Remark:Condition}
As it is displayed in Lemma \ref{transfer principle} (ii), the conditioned transfer principle in the setting of a global minimum $m$ holds without any further conditions. Having said that, it seems at least surprising that a condition like \eqref{eq:condition} needs no be imposed to prove a conditioned transfer principle for local minima. However, a closer look reveals that such a condition is natural. As it is shown in the proof, conditioned on the same event $\{S_n/n \in [m-a,m+a]\} \ S_n/n^{\alpha}+W/n^{\alpha-1/2}$ and $S_n/n^{\alpha}$ are closely related since their difference is given by a centered Gaussian. Therefore, the transfer principle comes back to relating
\begin{equation}\label{eq:trans}
(S_n/n^{\alpha}+W/n^{\alpha-1/2}) \big|_{\frac{S_n}{n} \in [m-a,m+a]} \text{ and } (S_n/n^{\alpha}+W/n^{\alpha-1/2})\big|_{\frac{S_n}{n} + \frac{W}{\sqrt{n}} \in [m-a,m+a]}.
\end{equation}
Whereas the event $\{S_n/n \in [m-a,m+a]\}$ does not contain information about $W/n^{\alpha-1/2}$, we see that $W/n^{\alpha-1/2} \approx b n^{1-\alpha} \rightarrow \infty$ on the set $\{S_n/n + W/\sqrt{n} \in [m-a,m+a]\}$ for large values of $h$. Indeed, for large values of $h$ the LDP for $S_n/n$ yields that $S_n/n$ is concentrated around a global minimum of $I_{\beta}^{\nu}$, which in turn is a global minimum of $G$. If $S_n/n + W/\sqrt{n}$ is now concentrated around a local minimum $m$ of $G$, this implies that $W/\sqrt{n}$ is of constant order. In conclusion, for large values of $h$ the different conditioning events lead to different behaviors of $W/n^{\alpha-1/2}$, which in turn lead to different behaviors of the random variables in \eqref{eq:trans} on moderate deviations scales.

\noindent Even more, condition \eqref{eq:condition} seems to be optimal. Since $W/\sqrt{n} \sim \mathcal{N}(0,(\beta n)^{-1})$ a use of the LDP for $S_n/n$ yields
\begin{equation*}
P_{n,\beta}^h \left(\frac{S_n}{n} +\frac{W}{\sqrt{n}} \in [m-a,m+a]\right) ~\approx~ \int_{\R} e^{-n f(\delta)}d\delta,
\end{equation*}
where $f(\delta) = \beta \delta^2 /2 + I_{\beta}^{\nu}(m-\delta)$. The structure of $f$ shows the competing influence of $S_n/n$ and $W/\sqrt{n}$. It is $f(0) = I_{\beta}^{\nu}(m) = G(m)-\inf_{x \in \R}G(x) = h$ and $f(\pm b) = \beta b^2/2$. Thus, if condition \eqref{eq:condition} is satisfied $f$ is minimal around $0$ and major contributions to the event $\{S_n/n +W/\sqrt{n} \in [m-a,m+a]\}$ stem from the event $\{W/\sqrt{n} \approx 0\}$. On the other hand, if condition \eqref{eq:condition} is not satisfied $f$ is minimal for $\pm b$ and consequently $|W/\sqrt{n}| \approx b$.
\end{remark}

\noindent By means of the last lemma the proof of a MDP for $(S_n -n m)/n^{\alpha}$ comes back to the proof of a MDP for $(S_n -n m)/n^{\alpha} + W/n^{\alpha-1/2}$. As we already know, the density of this random variable is proportional to $\exp(-nG_n^h(m+ n^{\alpha-1}s))ds$. Therefore, we study (the asymptotic behavior of) $G_n^h$ in the following two lemmas:

\begin{lemma}\label{Lemma:deMatos}
Suppose that $\nu$ has a finite absolute first moment. Then, for $\mu$-a.\,e.\ realization $h$ the following holds:
\begin{itemize}
\item[(i)] For every $j \in \N_0$
\begin{equation}\label{eq:uniform convergence on compacta}
G_n^{h (j)} ~\rightarrow~ G^{h (j)}
\end{equation}
as $n \to \infty$, where the convergence is uniformly on compact sets of $\R$.
\item[(ii)] Let $m$ be a global minimum of $G$ and let $V$ be a closed (possibly unbounded) subset of $\R$ containing no global minimum of $G$. Then, there exists $\varepsilon > 0$ such that
\begin{equation*}
\int_{V}{e^{-n(G_n^h(s)-G(m))}ds} ~\leq~ e^{-n \varepsilon},
\end{equation*}
where $n$ is sufficiently large.
\end{itemize}
\end{lemma}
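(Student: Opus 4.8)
The plan is to treat the two parts separately: part (i) is a strong law of large numbers upgraded to uniform convergence on compacta by an Arzel\`a--Ascoli argument, and part (ii) follows by combining part (i) on a large compact interval with a crude coercivity bound on $G_n^h$ off that interval.

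For (i), write $g(y)\Defi\ln\cosh y$, so that $G_n^h(x)=\frac{\beta}{2}x^2-\frac1n\sum_{i=1}^n g(\beta(x+\tilde h_i))$ while $G(x)=\frac{\beta}{2}x^2-\int_\R g(\beta(x+h))\,d\nu(h)$. Fix $x\in\R$ and $j\in\N_0$. For $j=0$ the summands $g(\beta(x+\tilde h_i))$ are i.i.d.\ with $\E|g(\beta(x+h_1))|\le\beta|x|+\beta\int|h|\,d\nu(h)<\infty$ (using $|g(y)|\le|y|$ and the assumed finite absolute first moment), so the SLLN gives $G_n^h(x)\to G(x)$ $\mu$-a.s.; for $j\ge1$ the summands $\beta^j g^{(j)}(\beta(x+\tilde h_i))$ are i.i.d.\ and bounded ($g'=\tanh$, $g''=1-\tanh^2$, and in general $g^{(j)}$ is a polynomial in $\tanh$, hence bounded on $\R$), so the SLLN gives $G_n^{h\,(j)}(x)\to G^{(j)}(x)$ $\mu$-a.s.\ with no moment condition. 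Intersecting these null sets over all rational $x$ and all $j\in\N_0$, we obtain a single full-measure event on which $G_n^{h\,(j)}\to G^{(j)}$ pointwise on $\mathbb{Q}$ for every $j$, and on which moreover $\sup_n\frac1n\sum_{i=1}^n|\tilde h_i|<\infty$. On that event each family $(G_n^{h\,(j)})_n$ is uniformly bounded on every compact set (using the previous display for $j=0$ and $\|g^{(j)}\|_\infty<\infty$ for $j\ge1$) and equi-Lipschitz there, since $|G_n^{h\,(j+1)}|$ is bounded on compacta by a constant independent of $n$. Arzel\`a--Ascoli together with pointwise convergence on the dense set $\mathbb{Q}$ then forces $G_n^{h\,(j)}\to G^{(j)}$ uniformly on compacta, which is \eqref{eq:uniform convergence on compacta}.

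For (ii), set $M\Defi\inf_{w\in\R}G(w)=G(m)$. If $V=\emptyset$ there is nothing to prove; otherwise, since $G$ is continuous, coercive ($G(s)\sim\frac{\beta}{2}s^2$ as $|s|\to\infty$) and $V$ is closed, $G|_V$ attains its infimum at some $s_0\in V$, and $G(s_0)>M$ because $V$ contains no global minimum of $G$; put $2\varepsilon_0\Defi\inf_V G-M>0$. Using $|g(y)|\le|y|$ and $\frac1n\sum_{i=1}^n|\tilde h_i|\to\int|h|\,d\nu(h)\eqqcolon c$ $\mu$-a.s., for large $n$ one has $G_n^h(s)\ge\frac{\beta}{2}s^2-\beta|s|-\beta(c+1)$, so we may fix $R$ so large that $G_n^h(s)-M\ge\frac{\beta}{4}s^2$ for all $|s|\ge R$ and all large $n$. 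Split $\int_V e^{-n(G_n^h(s)-G(m))}\,ds$ into the integrals over $V\cap[-R,R]$ and over $V\setminus[-R,R]$. On the compact part, (i) gives $G_n^h\ge G-\varepsilon_0\ge M+\varepsilon_0$ on $V\cap[-R,R]$ for $n$ large, so that contribution is at most $2R\,e^{-n\varepsilon_0}$. On the tail, $\int_{|s|\ge R}e^{-n\beta s^2/4}\,ds\le e^{-n\beta R^2/8}\int_\R e^{-n\beta s^2/8}\,ds=e^{-n\beta R^2/8}\sqrt{8\pi/(n\beta)}\le e^{-n\beta R^2/16}$ for $n$ large. Taking $\varepsilon\Defi\frac12\min\{\varepsilon_0,\beta R^2/16\}$ yields $\int_V e^{-n(G_n^h(s)-G(m))}\,ds\le e^{-n\varepsilon}$ for all $n$ sufficiently large.

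The only point requiring care is the far-tail estimate in (ii): a direct Gaussian bound on $\int_{|s|\ge R}e^{-n\beta s^2/4}\,ds$ is only polynomially small in $n$, so one must first pull out the fixed exponential factor $e^{-n\beta R^2/8}$ before integrating, which is why $R$ should be taken genuinely large rather than merely large enough to exclude the global minima of $G$. Everything else is routine bookkeeping: the boundedness of the derivatives of $\ln\cosh$, and arranging for the various $\mu$-a.s.\ statements (for $j=0$, for all $j\ge1$, and for $\frac1n\sum_i|\tilde h_i|$) to hold simultaneously on one common full-measure set.
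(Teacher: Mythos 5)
Your proof is correct, but it takes a different route from the paper: the paper does not prove this lemma at all, it simply imports it from Amaro de Matos--Perez \cite{Amar1991} (part (i) is their Lemma 3.5, part (ii) is deduced from their Lemma 3.1), whereas you give a self-contained argument. Your part (i) -- pointwise SLLN at rational points (using $|\ln\cosh y|\le|y|$ and the finite first moment for $j=0$, boundedness of $g^{(j)}$ for $j\ge1$), then equiboundedness and equi-Lipschitz bounds on compacta plus Arzel\`a--Ascoli to upgrade to uniform convergence -- is sound; the only step you pass over silently is the identification of the SLLN limit $\E[\beta^j g^{(j)}(\beta(x+h_1))]$ with $G^{(j)}(x)$, i.e.\ differentiation under the integral sign, which is justified by the same boundedness of $g^{(j)}$ for $j\ge1$ (and is consistent with the paper's remark that $G$ is real analytic), so this is a routine omission rather than a gap. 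Your part (ii) -- attainment of $\inf_V G>G(m)$ via coercivity and closedness, the crude bound $G_n^h(s)\ge\frac{\beta}{2}s^2-\beta|s|-\beta(c+1)$ for large $n$ from the SLLN for $\frac1n\sum_i|\tilde h_i|$, and the split into $V\cap[-R,R]$ (handled by the uniform convergence of (i)) and the tail (handled by peeling off the factor $e^{-n\beta R^2/8}$ before integrating the Gaussian) -- is also correct, including the order of quantifiers: the full-measure event does not depend on $V$, while $\varepsilon_0$, $R$, $\varepsilon$ may. One small remark: your worry that the Gaussian tail is ``only polynomially small'' applies only if one bounds the tail by the full-line integral; the standard Mills-ratio bound would also give exponential decay directly, but your factor-splitting achieves the same thing. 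The trade-off between the two approaches is the obvious one: the citation keeps the paper short and defers to results proved under the same hypotheses, while your argument makes the paper self-contained at the cost of a page of elementary analysis.
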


\begin{proof}
This is already known due to \cite{Amar1991}. $(i)$ is the assertion of Lemma 3.5, $(ii)$ follows from Lemma 3.1.
\end{proof}

\begin{lemma}\label{uniform convergence}
Let $m$ be a (local or global) minimum of $G$ and let $m$ be of type $k$ and strength $\lambda$. Suppose that $\nu$ has a finite second moment. Then, for $\mu$-a.\,e.\ realization $h$ and every $1-1/\big(2(2k-1)\big)<\alpha<1$ the following holds:
\begin{itemize}
\item[(i)] For every $s \in \R$
\begin{equation*}
n^{2k(1-\alpha)} \big(G_n^h(m+ s n^{\alpha-1})-G_n^h(m)\big) ~\rightarrow~ \frac{\lambda s^{2k}}{(2k)!}
\end{equation*}
as $n \to \infty$. What is more, this convergence holds uniformly on compact sets of $\R$.
\item[(ii)] There exists $\delta > 0$ such that for $n$ sufficiently large
\begin{equation}\label{eq:(ii)}
n^{2k(1-\alpha)} \big(G_n^h(m+ s n^{\alpha-1})-G_n^h(m)\big) ~\geq~ P_{2k}(s)
\end{equation}
for all $s \in [-\delta n^{1-\alpha}, \delta n^{1-\alpha}]$, where $P_{2k}(s) \Defi \frac{\lambda}{2\, (2k)!}s^{2k} -\sum_{i=1}^{2k-1}|s|^{i}$ is a polynomial of degree $2k$.
\end{itemize}
\end{lemma}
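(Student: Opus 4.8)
The plan is to reduce both parts to the pointwise (uniform-on-compacta) convergence of the derivatives $G_n^{h\,(j)} \to G^{(j)}$ supplied by Lemma \ref{Lemma:deMatos}(i), combined with a Taylor expansion of $G_n^h$ around $m$ of order $2k$. Write $T_n(s) \Defi n^{2k(1-\alpha)}\big(G_n^h(m + s n^{\alpha-1}) - G_n^h(m)\big)$. For part (i), fix a compact set $K$. Taylor-expanding $G_n^h$ at $m$ with Lagrange remainder, and using that $G_n^{h\,(1)}(m) \to G^{(1)}(m) = 0$ while $G_n^{h\,(j)}(m) \to G^{(j)}(m)$ for $2 \le j \le 2k-1$ with $G^{(j)}(m) = 0$ there (since $m$ is a minimum of type $k$, so the first non-vanishing derivative is the $(2k)$-th, equal to $\lambda$), each term $n^{2k(1-\alpha)} \cdot \frac{G_n^{h\,(j)}(m)}{j!}\,(s n^{\alpha-1})^j$ for $1 \le j \le 2k-1$ is of the form $n^{(2k-j)(1-\alpha)} \cdot o(1) \cdot s^j$; here I need to be a little careful — the factor $n^{(2k-j)(1-\alpha)}$ blows up, so I need the $j$-th derivative at $m$ to go to zero \emph{fast enough}. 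This is exactly the point where the lower bound $\alpha > 1 - 1/(2(2k-1))$ enters. More robustly, I would not expand $G_n^h$ about $m$ with its own (nonzero) lower-order coefficients, but instead compare $G_n^h$ directly with $G$: by Lemma \ref{Lemma:deMatos}(i) the convergence $G_n^{h\,(2k)} \to G^{(2k)}$ is uniform on a neighbourhood of $m$, so for $s \in K$ and $n$ large, $G_n^h(m + s n^{\alpha-1})$ differs from $G(m+sn^{\alpha-1})$ by an error controlled by $\sup_{|x-m|\le |K| n^{\alpha-1}} |G_n^h(x) - G(x)|$; the genuinely delicate estimate is that this sup, multiplied by $n^{2k(1-\alpha)}$, tends to $0$, which is where I must invoke a quantitative rate of convergence $\|G_n^h - G\|_{\infty, \text{nbhd}} \to 0$ — and the a.s.\ rate here comes from the law of large numbers for $\frac1n \sum_i \ln\cosh[\beta(x+\tilde h_i)]$ together with its derivatives, i.e.\ from the finite-second-moment hypothesis via a Borel–Cantelli / uniform-LLN argument. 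Once the error is negligible, $n^{2k(1-\alpha)}(G(m+sn^{\alpha-1}) - G(m)) \to \frac{\lambda}{(2k)!} s^{2k}$ by the definition of type $k$ and strength $\lambda$, and the uniformity on $K$ is immediate from the uniformity of all the estimates.

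For part (ii), I want the bound \eqref{eq:(ii)} not just on compacta but on the growing interval $[-\delta n^{1-\alpha}, \delta n^{1-\alpha}]$, i.e.\ for $x = m + s n^{\alpha-1}$ ranging over a \emph{fixed} interval $[m-\delta, m+\delta]$. Choose $\delta>0$ small enough that, by real analyticity of $G$, on $[m-\delta,m+\delta]$ one has $G(x) - G(m) \ge \frac{\lambda}{(2k)!}(x-m)^{2k} - C|x-m|^{2k+1} \ge \frac{3\lambda}{4(2k)!}(x-m)^{2k}$ (shrinking $\delta$ to absorb the $(2k+1)$-st order term). Then $n^{2k(1-\alpha)}(G(m+sn^{\alpha-1}) - G(m)) \ge \frac{3\lambda}{4(2k)!} s^{2k}$ for all $|s| \le \delta n^{1-\alpha}$. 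It remains to control the difference $n^{2k(1-\alpha)}\big(G_n^h(m+sn^{\alpha-1}) - G_n^h(m) - [G(m+sn^{\alpha-1}) - G(m)]\big)$ uniformly over this range: by the mean value theorem this equals $n^{2k(1-\alpha)} \cdot (x-m) \cdot (G_n^{h\,(1)}(\xi) - G^{(1)}(\xi))$ for some $\xi$ between $m$ and $x$, which is bounded by $n^{2k(1-\alpha)} \cdot \delta n^{\alpha - 1} \cdot \|G_n^{h\,(1)} - G^{(1)}\|_{\infty,[m-\delta,m+\delta]} = n^{(2k-1)(1-\alpha)} \cdot \delta \cdot \|G_n^{h\,(1)} - G^{(1)}\|_{\infty}$. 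Since $(2k-1)(1-\alpha) < 1/2 < 1$ (again by the hypothesis $\alpha > 1 - 1/(2(2k-1))$), and since a.s.\ $\|G_n^{h\,(1)} - G^{(1)}\|_{\infty,[m-\delta,m+\delta]}$ decays like $n^{-1/2}$ up to logarithmic factors (LLN with a rate, finite second moment), this error tends to $0$ uniformly — so for $n$ large it is $\le \frac{\lambda}{4(2k)!}|s|^{2k}$ when, say, $|s| \ge 1$, and is $\le 1 \le \sum_{i=1}^{2k-1}|s|^i$ when $|s| \le 1$ (after adjusting constants); in either regime $T_n(s) \ge \frac{\lambda}{2(2k)!}s^{2k} - \sum_{i=1}^{2k-1}|s|^i = P_{2k}(s)$. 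A cleaner uniform way to phrase the last step: show $T_n(s) \ge \frac{\lambda}{2(2k)!}s^{2k} - \varepsilon_n(1 + |s|^{2k-1})$ with $\varepsilon_n \to 0$, and note $\varepsilon_n(1+|s|^{2k-1}) \le \sum_{i=1}^{2k-1}|s|^i$ for $n$ large uniformly in $s$ (since the left side is $o(1)\cdot|s|^{2k-1}$ at infinity while the right side has leading term $|s|^{2k-1}$, and near $0$ the constant $\varepsilon_n$ is dominated by the linear term once $\varepsilon_n$ is small — here one takes $\delta$ and $n$ so that this holds).

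The main obstacle is the uniform control of $G_n^h - G$ and of $G_n^{h\,(1)} - G^{(1)}$ with a rate good enough to beat the amplification factors $n^{2k(1-\alpha)}$ and $n^{(2k-1)(1-\alpha)}$ respectively. Lemma \ref{Lemma:deMatos}(i) only hands me \emph{qualitative} uniform-on-compacta convergence of the derivatives, which by itself is not enough because the amplification blows up. The resolution is that the amplification exponents are $< 1$ precisely under the hypothesis on $\alpha$, so any a.s.\ rate of order $n^{-\gamma}$ with $\gamma \ge 1$ — and in fact any rate $n^{-\gamma}$ with $\gamma$ exceeding the relevant amplification exponent — suffices; such a rate follows from the classical i.i.d.\ strong law with moment conditions applied to $\frac1n\sum_i \partial_x^j \ln\cosh[\beta(x+\tilde h_i)]$ on a finite grid of $x$-values together with equicontinuity (the derivatives of $\ln\cosh$ are bounded, so the second-moment hypothesis on $\nu$ transfers directly). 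I would state and prove this quantitative uniform LLN as the first, and essentially only substantive, step, after which parts (i) and (ii) are the Taylor-expansion bookkeeping sketched above.
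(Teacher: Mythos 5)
There is a genuine gap in your reduction for part (i), and it propagates into your concluding paragraph. You propose to replace $G_n^h(m+sn^{\alpha-1})$ by $G(m+sn^{\alpha-1})$ and to control the error by $n^{2k(1-\alpha)}\sup_{|x-m|\le \mathrm{const}\cdot n^{\alpha-1}}|G_n^h(x)-G(x)|$, asserting that an almost sure rate $\|G_n^h-G\|_\infty=o(n^{-2k(1-\alpha)})$ follows from a strong law with second moments (you even write that a rate $n^{-\gamma}$ with $\gamma\ge 1$ would be available). This is unattainable: $G_n^h(m)-G(m)=-\frac1n\sum_i\bigl(\ln\cosh[\beta(m+\tilde h_i)]-\E_\nu\ln\cosh[\beta(m+h)]\bigr)$ is a centered i.i.d.\ average with, in general, strictly positive variance, so by the CLT/LIL it is of exact order $n^{-1/2}$ almost surely along subsequences; no law of large numbers gives a rate better than $n^{-1/2}$. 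Meanwhile the amplification exponent $2k(1-\alpha)$ is only constrained to be $<k/(2k-1)$, which exceeds $1/2$ (for $k=1$ and $1/2<\alpha<3/4$ it is larger than $1/2$ and can be close to $1$). Hence $n^{2k(1-\alpha)}\sup|G_n^h-G|$ does not tend to $0$ in general, and the ``genuinely delicate estimate'' your part (i) rests on is false. The point you are missing is the cancellation of this zeroth-order fluctuation in the difference $G_n^h(m+sn^{\alpha-1})-G_n^h(m)$: this is why the paper Taylor-expands $G_n^h$ \emph{itself} around $m$ to order $2k$ (Lagrange remainder), so that only the coefficients $n^{(2k-i)(1-\alpha)}G_n^{h\,(i)}(m)$, $1\le i\le 2k$, need quantitative control, with worst amplification exponent $(2k-1)(1-\alpha)<1/2$ exactly under the hypothesis $\alpha>1-1/(2(2k-1))$; these limits ($0$ for $i<2k$, $\lambda$ for $i=2k$) are supplied by the Marcinkiewicz--Zygmund strong law under the finite second moment, and the remainder needs only the qualitative uniform convergence of $G_n^{h\,(2k+1)}$ from Lemma \ref{Lemma:deMatos}(i).

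Your part (ii) essentially contains the correct cancellation device — the mean value theorem applied to $[G_n^h-G](m+sn^{\alpha-1})-[G_n^h-G](m)$, giving an error $|s|\,n^{(2k-1)(1-\alpha)}\|G_n^{h\,(1)}-G^{(1)}\|_{\infty,[m-\delta,m+\delta]}$ with exponent below $1/2$ — and this idea, applied also in part (i), would repair the argument; the a.s.\ uniform rate for the first derivative is indeed available since the summands $\tanh[\beta(x+\tilde h_i)]$ are bounded and Lipschitz in $x$. Two caveats there: your intermediate bound with $|x-m|\le\delta n^{\alpha-1}$ is wrong over the range $|s|\le\delta n^{1-\alpha}$ (one only has $|x-m|\le\delta$), but your ``cleaner'' formulation with $\varepsilon_n(1+|s|^{2k-1})$ repairs this; and you should note that the paper's route for (ii) avoids the uniform-in-$x$ rate entirely, needing only the derivative rates at the single point $m$ plus a crude bound $|R_n(s)|\le\delta\max_{|x-m|\le1}|G^{(2k+1)}(x)|\,s^{2k}/(2k)!$ for the remainder, with $\delta$ chosen small against $\lambda$. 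As it stands, however, part (i) and the summary paragraph rely on a rate of convergence that cannot hold, so the proposal is not a complete proof.
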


\begin{proof}
Let $h$ be such that for all $1 \leq i \leq 2k$ and $1-1/\big(2(2k-1)\big)<\alpha<1$
\begin{eqnarray}\label{convergence of moments}
&& n^{(2k-i)(1-\alpha)} |G_n^{h (i)}(m)-G^{(i)}(m)| \nonumber\\
&=&  \Big|\frac{\sum_{i=1}^n \ln \cosh [\beta(m + \tilde{h}_i)]-\E_{\mu}[\ln \cosh [\beta(m + h_i)]]}{n^{1-(2k-i)(1-\alpha)}}\Big| \nonumber\\
&\rightarrow& 0
\end{eqnarray}
as $n \to \infty$. Since $\nu$ has a finite second moment and $1-(2k-i)(1-\alpha) > 1/2$, we can choose such a realization $h$ with probability 1 due to the strong law of large numbers by Marcinkiewicz and Zygmund (see e.\,g.\ Theorem 2, p.\ 122, in \cite{Chow2003}). Moreover, let $h$ be such that \eqref{eq:uniform convergence on compacta} is satisfied and fix $1-1/\big(2(2k-1)\big)<\alpha<1$.\\
\noindent ad (i): A use of Taylor's theorem gives
\begin{equation}\label{Taylor}
n^{2k(1-\alpha)} \big(G_n^h(m+ s n^{\alpha-1})-G_n^h(m)\big) ~=~ n^{2k(1-\alpha)} \sum_{i=1}^{2k}\frac{G_n^{h (i)}(m)}{i!}\left(s n^{\alpha-1}\right)^{i} +R_n(s),
\end{equation}
where
\begin{eqnarray}\label{eq:Restglied Taylor}
R_n(s)
&=& n^{2k(1-\alpha)} \frac{G_n^{h (2k+1)}(\xi_n)}{(2k+1)!}(s n^{\alpha-1})^{2k+1}\nonumber\\
&=& \frac{1}{n^{1-\alpha}} \frac{G_n^{h (2k+1)}(\xi_n)}{(2k+1)!} s^{2k+1}
\end{eqnarray}
for some $\xi_n \in [m,m+sn^{\alpha-1}]$ if $s\geq 0$ and $\xi_n \in [m+sn^{\alpha-1},m]$ otherwise. Due to \eqref{eq:uniform convergence on compacta}, $G_n^{h (2k+1)}$ converges uniformly on compact sets to $G^{(2k+1)}$, which is a continuous function. Consequently, $R_n$ converges to 0 uniformly on compact sets. What is left to prove is that
\begin{equation}\label{eq:rest}
\sum_{i=1}^{2k}\frac{n^{(2k-i)(1-\alpha)}G_n^{h (i)}(m)}{i!}s^i ~\rightarrow~ \frac{\lambda s^{2k}}{(2k)!}
\end{equation}
uniformly on compact sets as $n \to \infty$. Since $m$ is a minimum of type $k$ and strength $\lambda$ of $G$, i.\,e.
\begin{equation*}
G^{(i)}(m) ~=~
\begin{cases}
0,& \text{ if } i \in \{1,\ldots,2k-1\},\\
\lambda,& \text{ if } i = 2k,
\end{cases}
\end{equation*}
a use of \eqref{convergence of moments} yields
\begin{equation}\label{eq:convergence of derivatives}
\lim_{n \to \infty} n^{(2k-i)(1-\alpha)}G_n^{h (i)}(m) ~=~
\begin{cases}
0,& \text{ if } i \in \{1,\ldots,2k-1\},\\
\lambda,& \text{ if } i = 2k,
\end{cases}
\end{equation}
and \eqref{eq:rest} follows.\\
\noindent ad (ii): Using the Taylor expansion \eqref{Taylor} we see
\begin{eqnarray}\label{eq:1}
&& n^{2k(1-\alpha)} \big(G_n^h(m+ s n^{\alpha-1})-G_n^h(m)\big) \nonumber\\
&=& n^{2k(1-\alpha)} \sum_{i=1}^{2k}\frac{G_n^{h (i)}(m)}{i!}\left(s n^{\alpha-1}\right)^{i} +R_n(s)\nonumber\\
&\geq& \frac{G_n^{h (2k)}(m)}{(2k)!}s^{2k} - \sum_{i=1}^{2k-1}\frac{|n^{(2k-i)(\alpha-1)}G_n^{h (i)}(m)|}{i!}|s|^{i} -|R_n(s)| \nonumber\\
&\geq& \frac{3 \lambda}{4 (2k)!}s^{2k} - \sum_{i=1}^{2k-1}|s|^{i} -|R_n(s)|
\end{eqnarray}
for $n$ sufficiently large, where we have used \eqref{eq:convergence of derivatives} to derive the last line. Finally, we see that \eqref{eq:Restglied Taylor} yields for all $s \in [- \delta n^{1-\alpha},\delta n^{1-\alpha}], \delta \leq 1,$
\begin{equation*}
|R_n(s)| ~\leq~ \delta \, \max_{|x-m|\leq1}|G^{(2k+1)}(x)| \frac{s^{2k}}{(2k)!}
\end{equation*}
for $n$ sufficiently large. Therefore, choosing $\delta$ less than $\lambda \,\max_{|x-m|\leq1}|G^{(2k+1)}(x)| /4 \wedge 1$ we get the assertion by means of \eqref{eq:1}.
\end{proof}
\section{Proofs}\label{Proofs}

This section is devoted to the proofs of our main results, Theorem \ref{conditioned version} and Theorem \ref{unconditioned version}. We will first prove Theorem \ref{conditioned version}. Using parts of this proof, the proof of Theorem \ref{unconditioned version} won't be difficult in the end.

\begin{proof}[Proof of Theorem \ref{conditioned version}]
Let $m$ be a (local or global) minimum of $G$ and let $m$ be of type $k$ and strength $\lambda$. Moreover, let $h$ be such that the assertions of Lemma \ref{uniform convergence} hold and take $A$ to be the minimum of $\delta$ (cf.\ Lemma \ref{uniform convergence}) and c (cf.\ Lemma \ref{transfer principle} (ii)). Fix $1-1/\big(2(2k-1)\big)<\alpha<1$ and $0 < a < A$. Note that by Lemma \ref{uniform convergence}
\begin{equation}\label{eq:erste Abschaetzung}
n^{2k(1-\alpha)} \big(G_n^h(m+ s n^{\alpha-1})-G_n^h(m)\big) ~\geq~ P_{2k}(s)
\end{equation}
for all $|s| \leq a n^{1-\alpha}$ and $n$ sufficiently large.

\noindent In order to prove Theorem \ref{conditioned version}, it suffices by means of Lemma \ref{transfer principle} to prove that
\begin{equation*}
\left(P_{n,\beta}^h \left(\frac{S_n-n m}{n^{\alpha}} + \frac{W}{n^{\alpha-\frac12}} \in \bullet \Big|\, \frac{S_n-n m}{n^{\alpha}} + \frac{W}{n^{\alpha-\frac12}} \in [a n^{1-\alpha},a n^{1-\alpha}]\right)\right)_{n \in \N}
\end{equation*}
satisfies a moderate deviations principle with speed $n^{1-2k(1-\alpha)}$ and rate function
\begin{equation*}
J(x) ~=~\frac{\lambda x^{2k}}{(2k)!}.
\end{equation*}
Note that the densities of these measures are given in Lemma \ref{transformation} and thus it suffices to prove that
\begin{eqnarray*}
&& \lim_{n \to \infty}\frac{1}{n^{1-2k(1-\alpha)}} \log \frac{\int_{B\cap[-a n^{1-\alpha}, a n^{1-\alpha}]}{e^{-nG_n^h(m+ s n^{\alpha-1})}ds}}{\int_{[-a n^{1-\alpha}, a n^{1-\alpha}]}{e^{-nG_n^h(m+ s n^{\alpha-1})}ds}} \\
&=& \lim_{n \to \infty}\frac{1}{n^{1-2k(1-\alpha)}} \log \frac{\int_{B\cap[-a n^{1-\alpha}, a n^{1-\alpha}]}{e^{-n(G_n^h(m+ s n^{\alpha-1})-G_n^h(m))}ds}}{\int_{[-a n^{1-\alpha}, a n^{1-\alpha}]}{e^{-n(G_n^h(m+ s n^{\alpha-1})-G_n^h(m))}ds}}\\
&\stackrel{!}{=}& -\inf_{x \in B}J(x)
\end{eqnarray*}
for every $B \in \mathcal{B}(\R)$. This obviously follows from proving
\begin{equation}\label{Kernaussage}
\lim_{n \to \infty}\frac{1}{n^{1-2k(1-\alpha)}} \log \int_{B\cap[-a n^{1-\alpha}, a n^{1-\alpha}]}{e^{-n(G_n^h(m+ s n^{\alpha-1})-G_n^h(m))}ds}
~=~ -\inf_{x \in B}J(x)
\end{equation}
for every $B \in \mathcal{B}(\R)$ as the choice $B = \R$ yields
\begin{equation*}
\lim_{n \to \infty}\frac{1}{n^{1-2k(1-\alpha)}} \log \int_{[-a n^{1-\alpha}, a n^{1-\alpha}]}{e^{-n(G_n^h(m+ s n^{\alpha-1})-G_n^h(m))}ds}
~=~ -\inf_{x \in \R}J(x) ~=~ 0.
\end{equation*}
Fix $B \in \mathcal{B}(\R)$ and let $B$ be non-empty as \eqref{Kernaussage} is trivial otherwise. Since $B$ is non-empty and $\lim_{|s|\rightarrow \infty}P_{2k}(s) = \infty$ there exists a constant $K > 0$ such that
\begin{equation}\label{eq:zweite Abschaetzung}
\inf_{|s| > K}P_{2k}(s) ~>~ \inf_{s \in B}\left(\frac{\lambda s^{2k}}{(2k)!}\right)+1.
\end{equation}
In particular, there exists a $x_0 \in B$ such that $|x_0| \leq K$ as otherwise
\begin{eqnarray*}
\inf_{s \in B}\left(\frac{\lambda s^{2k}}{(2k)!}\right)+1
&\geq& \frac{\lambda K^{2k}}{(2k)!}+1\\
&\geq& \frac{\lambda K^{2k}}{2 (2k)!}\\
&\geq& \inf_{|s| > K}P_{2k}(s)
\end{eqnarray*}
contradicting \eqref{eq:zweite Abschaetzung}. Since we have
\begin{eqnarray*}
&& \liminf_{n \to \infty}\frac{1}{n^{1-2k(1-\alpha)}} \log \int_{B\cap[-a n^{1-\alpha}, a n^{1-\alpha}]}{e^{-n(G_n^h(m+ s n^{\alpha-1})-G_n^h(m))}ds}\\
&\geq& \liminf_{n \to \infty}\frac{1}{n^{1-2k(1-\alpha)}} \log \int_{\substack{s \in B:\\ |s| \leq K}}{e^{-n(G_n^h(m+ s n^{\alpha-1})-G_n^h(m))}ds}
\end{eqnarray*}
and (cf. Lemma 1.2.15 in \cite{Demb1998})
\begin{eqnarray*}
&& \limsup_{n \to \infty}\frac{1}{n^{1-2k(1-\alpha)}} \log \int_{B\cap[-a n^{1-\alpha}, a n^{1-\alpha}]}{e^{-n(G_n^h(m+ s n^{\alpha-1})-G_n^h(m))}ds}\\
&=& \max\bigg\{\limsup_{n \to \infty}\frac{1}{n^{1-2k(1-\alpha)}} \log \int_{\substack{s \in B:\\ |s| \leq K}}{e^{-n(G_n^h(m+ s n^{\alpha-1})-G_n^h(m))}ds},\\
&& \limsup_{n \to \infty}\frac{1}{n^{1-2k(1-\alpha)}} \log \int_{\substack{s \in B:\\ K < |s| \leq a n^{1-\alpha}}}{e^{-n(G_n^h(m+ s n^{\alpha-1})-G_n^h(m))}ds}\bigg\}
\end{eqnarray*}
\eqref{Kernaussage} follows from proving
\begin{equation}\label{eq:Behauptung1}
\lim_{n \to \infty}\frac{1}{n^{1-2k(1-\alpha)}} \log \int_{\substack{s \in B:\\ |s| \leq K}}{e^{-n(G_n^h(m+ s n^{\alpha-1})-G_n^h(m))}ds} ~=~ -\inf_{x \in B}J(x)
\end{equation}
and
\begin{equation} \label{eq:Behauptung2}
\limsup_{n \to \infty}\frac{1}{n^{1-2k(1-\alpha)}} \log \int_{\substack{s \in B:\\ K < |s| \leq a n^{1-\alpha}}}{e^{-n(G_n^h(m+ s n^{\alpha-1})-G_n^h(m))}ds} ~\leq~ -\inf_{x \in B}J(x) -1.
\end{equation}
To see \eqref{eq:Behauptung1} notice that by Lemma \ref{uniform convergence}
\begin{equation*}
n^{2k(1-\alpha)} \big(G_n^h(m+ s n^{\alpha-1})-G_n^h(m)\big) ~\rightarrow~ \frac{\lambda s^{2k}}{(2k)!}
\end{equation*}
uniformly for $s \in [-K,K]$ as $n \to \infty$ and therefore
\begin{eqnarray*}
&& \lim_{n \to \infty}\frac{1}{n^{1-2k(1-\alpha)}} \log \int_{\substack{s \in B:\\ |s| \leq K}}{e^{-n(G_n^h(m+ s n^{\alpha-1})-G_n^h(m))}ds}\\
&=& \lim_{n \to \infty}\frac{1}{n^{1-2k(1-\alpha)}} \log \int_{\substack{s \in B:\\ |s| \leq K}}{e^{-n^{1-2k(1-\alpha)}\frac{\lambda}{(2k)!} s^{2k}}ds}.
\end{eqnarray*}
The latter is a well-studied object of Laplace's Method (use e.\,g.\ a slight generalization of Lemma 2.6 in \cite{Reic2012}) and we get
\begin{eqnarray*}
\lim_{n \to \infty}\frac{1}{n^{1-2k(1-\alpha)}} \log \int_{\substack{s \in B:\\ |s| \leq K}}{e^{-n(G_n^h(m+ s n^{\alpha-1})-G_n^h(m))}ds}
&=& -\inf_{\substack{s\in B:\\|s| \leq K}}\left(\frac{\lambda s^{2k}}{(2k)!}\right)\\
&=& -\inf_{s\in B} J(s),
\end{eqnarray*}
where the last equality follows from the existence of $x_0$ and the monotonicity of $J$.

\noindent What is left to prove is \eqref{eq:Behauptung2}. Since \eqref{eq:erste Abschaetzung} holds for all $|s| \leq a n^{1-\alpha}$ we immediately get for $n$ sufficiently large
\begin{eqnarray*}
&& \int_{\substack{s \in B:\\ K < |s| \leq a n^{1-\alpha}}}{e^{-n(G_n^h(m+ s n^{\alpha-1})-G_n^h(m))}ds} \\
&\leq& \int_{\substack{s \in B:\\ K < |s| \leq a n^{1-\alpha}}}{e^{-n^{1-2k(1-\alpha)}P_{2k}(s)}ds}\\
&\leq& 2 (an^{1-\alpha}-K) e^{-n^{1-2k(1-\alpha)}\inf_{K < |s| < an^{1-\alpha}}P_{2k}(s)}\\
&\leq& 2 an^{1-\alpha} e^{-n^{1-2k(1-\alpha)}\inf_{|s|>K}P_{2k}(s)}
\end{eqnarray*}
and consequently
\begin{equation*}
\limsup_{n \to \infty}\frac{1}{n^{1-2k(1-\alpha)}} \log \int_{\substack{s \in B:\\ K < |s| \leq a n^{1-\alpha}}}{e^{-n(G_n^h(m+ s n^{\alpha-1})-G_n^h(m))}ds} ~\leq~ -\inf_{|s|>K}P_{2k}(s).
\end{equation*}
Now, \eqref{eq:Behauptung2} follows from the choice of $K$ in \eqref{eq:zweite Abschaetzung}.
\end{proof}

\noindent Finally, we can turn to the proof of Theorem \ref{unconditioned version}. After proving Theorem \ref{conditioned version} this is quite easy.

\begin{proof}[Proof of Theorem \ref{unconditioned version}]
Let $h$ be such that the assertions of Theorem \ref{conditioned version} and Lemma \ref{Lemma:deMatos} hold. Fix $1-1/\big(2(2k-1)\big)<\alpha<1$ and let $A$ be the constant appearing in Theorem \ref{conditioned version}. Using Lemma \ref{transformation} and Lemma \ref{transfer principle} (i) it suffices to prove
\begin{eqnarray*}
&& \lim_{n \to \infty}\frac{1}{n^{1-2k(1-\alpha)}} \log \frac{\int_{B}{e^{-nG_n^h(m+ s n^{\alpha-1})}ds}}{\int_{\R}{e^{-nG_n^h(m+ s n^{\alpha-1})}ds}} \\
&=& \lim_{n \to \infty}\frac{1}{n^{1-2k(1-\alpha)}} \log \frac{\int_{B}{e^{-n(G_n^h(m+ s n^{\alpha-1})-G_n^h(m))}ds}}{\int_{\R}{e^{-n(G_n^h(m+ s n^{\alpha-1})-G_n^h(m))}ds}}\\
&\stackrel{!}{=}& -\inf_{x \in B}J(x)
\end{eqnarray*}
for every $B \in \mathcal{B}(\R)$. Again, this follows from proving
\begin{equation}\label{Kernaussage2}
\lim_{n \to \infty}\frac{1}{n^{1-2k(1-\alpha)}} \log \int_{B}{e^{-n(G_n^h(m+ s n^{\alpha-1})-G_n^h(m))}ds} ~=~ -\inf_{x \in B}J(x)
\end{equation}
for every $B \in \mathcal{B}(\R)$. As we have seen in the previous proof (see \eqref{Kernaussage}) for fixed $0 < a < A$
\begin{equation*}
\lim_{n \to \infty}\frac{1}{n^{1-2k(1-\alpha)}} \log \int_{B\cap[-an^{1-\alpha}, an^{1-\alpha}]}{e^{-n(G_n^h(m+ s n^{\alpha-1})-G_n^h(m))}ds} ~=~ -\inf_{x \in B}J(x)
\end{equation*}
and thus \eqref{Kernaussage2} follows by means of Lemma 1.2.15 in \cite{Demb1998} from
\begin{equation}\label{eq:letzte}
\lim_{n \to \infty}\frac{1}{n^{1-2k(1-\alpha)}} \log \int_{\substack{s \in B:\\ |s| > an^{1-\alpha}}}{e^{-n(G_n^h(m+ s n^{\alpha-1})-G_n^h(m))}ds} ~=~ -\infty.
\end{equation}
To see \eqref{eq:letzte} note
\begin{eqnarray}
&& \lim_{n \to \infty}\frac{1}{n^{1-2k(1-\alpha)}} \log \int_{\substack{s \in B:\\ |s| > an^{1-\alpha}}}{e^{-n(G_n^h(m+ s n^{\alpha-1})-G_n^h(m))}ds}\nonumber\\
&\leq & \lim_{n \to \infty}\frac{1}{n^{1-2k(1-\alpha)}} \log \int_{|s| > an^{1-\alpha}}{e^{-n(G_n^h(m+ s n^{\alpha-1})-G_n^h(m))}ds}\nonumber\\
&=& \lim_{n \to \infty}\frac{1}{n^{1-2k(1-\alpha)}} \log e^{n(G_n^h(m)-G(m))}\int_{|s-m| > a}{e^{-n(G_n^h(s)-G(m))}ds}.\label{eq:3}
\end{eqnarray}
Since $h$ is such that the assertions of Lemma \ref{Lemma:deMatos} hold, there exists $\varepsilon > 0$ such that for $n$ sufficiently large
\begin{eqnarray*}
G_n^h(m)-G(m) &\leq& \varepsilon \text{ and}\\
\int_{|s-m| > a}{e^{-n(G_n^h(s)-G(m))}ds} &\leq& e^{-2 \varepsilon n},
\end{eqnarray*}
where we have used that $m$ is the unique global minimum of $G$ so that the set $\R \smallsetminus [m-a,m+a]$ does not contain a global minimum of $G$. Therefore, \eqref{eq:3} yields
\begin{eqnarray*}
\lim_{n \to \infty}\frac{1}{n^{1-2k(1-\alpha)}} \log \int_{\substack{s \in B:\\ |s| > an^{1-\alpha}}}{e^{-n(G_n^h(m+ s n^{\alpha-1})-G_n^h(m))}ds} &\leq& - \varepsilon \lim_{n \to \infty} n^{2k(1-\alpha)}\\ &=& -\infty,
\end{eqnarray*}
since $\alpha <1$.
\end{proof}

\bibliographystyle{plain}
{\footnotesize \bibliography{MDPcurie-weiss}}
\end{document}